%
%
%
%

\documentclass{article}
\usepackage{float}
\usepackage{graphicx,epstopdf}
\usepackage{bbm}
\usepackage{color}
\usepackage{amsfonts}
\usepackage{mathrsfs}
\usepackage{amsmath}
\usepackage{dsfont}
\usepackage{amssymb}
\usepackage{csquotes}
\usepackage{theorem}
\usepackage{graphicx}
\usepackage[dvips]{epsfig}
\usepackage{latexsym}
\usepackage{exscale}
\usepackage[latin1]{inputenc}
\newtheorem{theorem}{Theorem}

\newtheorem{definition}[theorem]{Definition}
\newtheorem{example}[theorem]{Example}

\newtheorem{lemma}[theorem]{Lemma}
\newtheorem{notation}[theorem]{Notation}

\newtheorem{remark}[theorem]{Remark}

\newenvironment{proof}[1][Proof]{\noindent\textbf{#1.} }{\ \rule{0.5em}{0.5em}}
\setlength{\parindent}{0pt}

  \begin{document}

\title{ Extensions of Bernstein's Lethargy Theorem}
\author{Asuman G\"{u}ven Aksoy}

\date{\vspace{-5ex}}
\maketitle

\maketitle
\begin{abstract}
In this   paper,  we examine the  aptly-named ``Lethargy Theorem" of Bernstein and survey its recent extensions. We  show that  one of these extensions shrinks the interval for best approximation by half while the other gives a surprising connection to the  space of bounded linear operators between two Banach spaces.

\end{abstract}
\footnotetext{{\bf Mathematics Subject Classification (2000):}
 41A25 \, 41A50 \, 46B20. \vskip1mm {\bf Key words: } Best approximation,  Bernstein's Lethargy Theorem,  Approximation numbers.}

\section{Introduction}
The formal beginnings of approximation theory date back to 1885, with Weierstrass' celebrated approximation theorem. The discovery that every continuous function defined on a closed interval $[a,b]$ can be uniformly approximated as closely as desired by a polynomial function immediately prompted many new questions. One such question concerned approximating functions with polynomials of limited degree. That is, if we limit ourselves to polynomials of degree at most $n$, what can be said of the best approximation? As it turns out, there is no unified answer to this question. In fact, S. N. Bernstein \cite{Bernstein} in 1938  showed that there exist functions whose best approximation converges arbitrarily slowly as the degree of the polynomial rises. In this paper, we take up this aptly-named ``Lethargy Theorem" of Bernstein and present two extensions.
For $f \in C([0,1])$,  the sequence of the best approximations is defined as:
\begin{equation}
\rho(f, P_n)= \inf \{ || f-p||:\,\,\, p\in P_n\}
\end{equation}
where $P_n$ denotes the space of all polynomials of degree $\leq n$.
Clearly,  $$ \rho(f, P_1) \geq\rho(f,P_2)\geq \cdots $$ and  $ \{\rho(f,P_n)\}$ form a non-increasing sequence of numbers.  Bernstein \cite{Bernstein} proved that if $\{d_n \}_{n\ge1}$ is a non-increasing null sequence (i.e. $\lim\limits_{n\to\infty}d_n=0$) of positive numbers, then there exists a function $f\in C[0,1]$ such that
$$\rho(f, P_n)= d_n,~\mbox{for all $n \ge1$}.$$
This remarkable result is called Bernstein's Lethargy Theorem (BLT) and is used in the constructive theory of functions \cite{Sin}, and it has been applied to the theory of quasi analytic functions in several complex variables \cite{Ple2, Ple3}.  Also see \cite{II} and references therein for an application of BLT to the study  Gonchar quasianalytic functions of several variables. 

 Bernstein's proof is based on compactness argument and only works when the subspaces  are finite dimensional. 
Note that the density of polynomials in $C[0,1]$ (the Weierstrass Approximation Theorem) implies that $$\displaystyle \lim_{n \to \infty} \rho(f, P_n) = 0.$$ However, the Weierstrass Approximation Theorem gives no information about the speed of convergence for $\rho(f, P_n)$, but Bernstein's Lethargy Theorem does.   Bernstein Lethargy Theorem has been extended, replacing $C[0,1]$ by an arbitrary Banach space $X$ and replacing  $P_n$ by arbitrary closed subspaces $\{Y_n\}$  of $X$  where $Y_1 \subset Y_2 \subset \cdots$  by Shapiro \cite{Sha}. Using Baire's category theory, he proved that  for each null sequence $\{d_n\}$ of non-negative numbers, there exists a vector $x\in X$ such that
\begin{equation*}
\label{Shap}
 \rho(x, Y_n) \neq O (d_n),~\mbox{as $n\to\infty$}.
 \end{equation*}
That is, there is no $M>0$ such that
$$\rho(x, Y_n) \leq M d_n,~\mbox{ for all $n\ge1$}.
 $$

Note that Shapiro's result is not restricted to  finite dimensional subspaces. 
 This result  was later strengthened by Tyuriemskih \cite{Tyu} . He showed that the sequence of errors of the best approximation from $x$ to $Y_n$, $\{\rho(x,Y_n)\}$ may converge to zero at an arbitrary slow rate. More precisely, for any expanding sequence $\{Y_n\}$ of subspaces of $X$ and for any null sequence $\{d_n\}$ of positive numbers, he constructed an element $x\in X$  such that
 \begin{equation*}
 \label{tyu}
 \lim_{n \rightarrow \infty} \rho(x, Y_n) =0,~\mbox{and}~\rho(x, Y_n) \geq d_n~\mbox{for all $n\ge1$}.
   \end{equation*}
   However, it is also possible that the errors of the best approximation $\{\rho(x,Y_n)\}$ may converge to zero arbitrarily fast; for results of this type see   \cite{Al-To}.
   
    We refer the reader to \cite{De-Hu} for an application of Tyuriemskih's Theorem to convergence of sequence of bounded linear operators. 
    
     We also refer to \cite{Ak-Al,Ak-Le,Al-To,Lew1} for other versions of Bernstein's Lethargy Theorem  and  to \cite{Ak-Le2,Alb,Mich,Ple1,Vas} for Bernstein's Lethargy Theorem for Fr\'{e}chet  spaces. 
    
    Given an arbitrary Banach space $X$, a strictly increasing sequence $\{Y_n \}$ of subspaces of $X$  and a non-increasing null sequence $\{d_n\}$ of non-negative numbers, one can ask the question whether  there exists $x \in X$ such that $\rho(x, Y_n) = d_n $ for each $n$? For a long time, no sequence $\{d_n\}$ of this type was known for which such an element $x$ exists for \textit{all} possible Banach spaces $X$. The only known spaces $X$ in which the answer is always ``yes" are the Hilbert spaces (see \cite{Tyu2}). For a general (separable) Banach space $X$, a solution $x$ is known to exist whenever all $Y_n$  are finite-dimensional (see \cite{Tim}).
Moreover, it is known that  if $X$ has  the above property, then it is reflexive (see \cite{Tyu2}).
\section {Extensions of BLT to Banach spaces}
The following lemma is a   Bernstein's Lethargy type  of a result restricted to  \textit{finite number} of subspaces,   and for the proof of this lemma  we refer the reader to Timan's book  \cite{Tim}.
\begin{lemma}
\label{lemma1}
Let $(X,\|\cdot\|)$ be a normed linear space, $Y_1\subset Y_2\subset\ldots\subset Y_n\subset X$ be a finite system of strictly nested subspaces, $d_1>d_2>\ldots>d_n\ge0$ and $z \in X\backslash Y_n$. Then, there is an element $x\in X$ for which $\rho(x,Y_k)=d_k$ $(k=1,\ldots,n)$,  $\|x\|\le d_1+1$, and $x-\lambda z\in Y_n$ for some $\lambda>0$.
\end{lemma}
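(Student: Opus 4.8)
The plan is to reduce the whole problem to a single scalar choice plus a ``peeling'' construction that fixes the distances one subspace at a time, from $Y_n$ down to $Y_1$. I would seek $x$ in the form $x=\lambda z+w$ with $w\in Y_n$ and $\lambda>0$, which is exactly the shape demanded by the last requirement $x-\lambda z\in Y_n$. The governing observation, used repeatedly, is that if $v\in Y_{k+1}$ then $\rho(a+v,Y_j)=\rho(a,Y_j)$ for every $j\ge k+1$, since $Y_{k+1}\subseteq Y_j$ lets $v$ be absorbed into the infimum; in particular, adding any element of $Y_{k+1}$ preserves all distances to $Y_{k+1},\dots,Y_n$, and adding any element of $Y_1$ preserves every one of $\rho(\cdot,Y_1),\dots,\rho(\cdot,Y_n)$.

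First I would pin down $\lambda$. Since $w\in Y_n$, $\rho(\lambda z+w,Y_n)=\rho(\lambda z,Y_n)=\lambda\,\rho(z,Y_n)$, so the requirement $\rho(x,Y_n)=d_n$ forces $\lambda=d_n/\rho(z,Y_n)$, a legitimate positive number because $z\notin Y_n$ gives $\rho(z,Y_n)>0$ (here I use that $Y_n$ is closed) and $d_n>0$. Set $p_n:=\lambda z$, so $\rho(p_n,Y_n)=d_n$ is already correct. The heart of the argument is the following one-step adjustment, which I expect to be the main obstacle: given $q$ with $\rho(q,Y_{k+1})=d_{k+1}$, there is $v\in Y_{k+1}$ with $\rho(q+v,Y_k)=d_k$. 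Consider $\phi(v):=\rho(q+v,Y_k)$ on $Y_{k+1}$; it is $1$-Lipschitz, hence continuous. Because $Y_k\subseteq Y_{k+1}$ and $v\in Y_{k+1}$, one has $\phi(v)\ge\rho(q+v,Y_{k+1})=d_{k+1}$, so $d_{k+1}$ is a floor; moreover choosing $v^\ast\in Y_{k+1}$ with $\|q+v^\ast\|<d_{k+1}+\varepsilon$ (possible since the infimum defining $\rho(q,Y_{k+1})$ equals $d_{k+1}$) yields $\phi(v^\ast)\le\|q+v^\ast\|<d_{k+1}+\varepsilon<d_k$ once $\varepsilon<d_k-d_{k+1}$, where I use the strict inequality $d_k>d_{k+1}$. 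In the opposite direction, pick $u\in Y_{k+1}$ with $\rho(u,Y_k)>0$ (this is where the strict, proper nesting of closed subspaces is essential) and note $\phi(v^\ast+su)\ge |s|\,\rho(u,Y_k)-\|q+v^\ast\|\to\infty$ as $s\to\infty$. Since $Y_{k+1}$ is convex, evaluating $\phi$ along the segment from $v^\ast$ to $v^\ast+su$ for large $s$ and applying the intermediate value theorem produces a $v$ with $\phi(v)=d_k$.

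With this tool in hand I would run the construction downward: having $p_{k+1}$ with $\rho(p_{k+1},Y_j)=d_j$ for all $j\ge k+1$, apply the adjustment to obtain $v_k\in Y_{k+1}$ and set $p_k:=p_{k+1}+v_k$; then $\rho(p_k,Y_k)=d_k$ by the choice of $v_k$, while $\rho(p_k,Y_j)=\rho(p_{k+1},Y_j)=d_j$ for $j\ge k+1$ because $v_k\in Y_{k+1}\subseteq Y_j$. A downward induction thus yields $p_1$ with $\rho(p_1,Y_k)=d_k$ for every $k=1,\dots,n$, and by construction $p_1-\lambda z=v_{n-1}+\cdots+v_1\in Y_n$ with $\lambda>0$. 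Finally, to meet the norm bound, I would add a vector $y\in Y_1$ with $\|p_1+y\|<\rho(p_1,Y_1)+1=d_1+1$; since $Y_1$ lies in every $Y_k$, this leaves all the distances and the coset membership untouched, so $x:=p_1+y$ satisfies all three conclusions. The delicate points to flag are the existence of $u\in Y_{k+1}$ with $\rho(u,Y_k)>0$ (which is what forces a closedness or finite-dimensionality hypothesis on the subspaces) and the degenerate case $d_n=0$, where $\lambda>0$ can be reconciled with $\rho(x,Y_n)=0$ only when $z\in\overline{Y_n}\setminus Y_n$; that I would treat separately as a limiting case.
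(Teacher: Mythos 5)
The paper does not actually prove Lemma~\ref{lemma1}; it defers entirely to Timan's book \cite{Tim} (and to Borodin \cite{Borodin1} for the Banach-space version), so there is no in-paper argument to compare against. Your proof is essentially the classical one: write $x=\lambda z+w$ with $w\in Y_n$, fix $\lambda$ from the condition $\rho(x,Y_n)=\lambda\rho(z,Y_n)=d_n$, and then peel downward, using at each stage that the distance function $v\mapsto\rho(q+v,Y_k)$ is $1$-Lipschitz on $Y_{k+1}$, bounded below by $d_{k+1}$, takes a value below $d_k$ near a near-best approximant, and tends to infinity along $v^\ast+su$; the intermediate value theorem then produces the adjusting vector, and translating by an element of $Y_1$ at the end secures the norm bound without disturbing anything. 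All of these steps check out, and the two delicate points you flag are exactly the right ones. One caution: the case $d_n=0$ is not merely a ``limiting case'' you can defer. If $\rho(z,Y_n)>0$ (e.g.\ $Y_n$ closed, as in Timan's finite-dimensional setting) and $d_n=0$, then $\rho(x,Y_n)=0$ together with $x-\lambda z\in Y_n$ forces $\lambda\rho(z,Y_n)=0$, so no $\lambda>0$ can work; the conclusion as stated is unattainable, and the statement must be read with either $\lambda\ge0$ or $d_n>0$. This is a defect of the quoted statement rather than of your argument, but your proof should say so explicitly rather than promise a separate treatment. Likewise, the existence of $u\in Y_{k+1}$ with $\rho(u,Y_k)>0$ needs $Y_{k+1}\not\subseteq\overline{Y_k}$, which strict algebraic inclusion alone does not give; it is automatic for finite-dimensional (or closed, strictly nested) subspaces, and the paper's Example~\ref{EXAM} is precisely about why some such hypothesis is indispensable. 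With those two caveats made explicit, your proof is complete and correct.
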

 Borodin \cite{Borodin1} proved  the above Lemma \ref{lemma1} by taking $(X,\|\cdot\|)$  to be a Banach space.  Returning to the question posed before, namely given an arbitrary Banach space $X$, a strictly increasing sequence $\{Y_n \}$ of subspaces of $X$  and a non-increasing null sequence $\{d_n\}$ of non-negative numbers, one can ask the question whether  there exists $x \in X$ such that $\rho(x, Y_n) = d_n $ for each $n$? For a long time no sequence $\{d_n\}$ of this type was known for which such an element $x$ exists for \textit{all} possible Banach spaces $X$.
 Borodin  in \cite{Borodin1} uses the above lemma for Banach space to establish the existence of such an element in case of rapidly decreasing sequences; more precisely, in 2006 he proves the following theorem:
\begin{theorem}[Borodin \cite{Borodin1}]
  \label{thm:Borodin}
  Let $X$ be an arbitrary Banach space (with finite or infinite dimension),  $Y_1 \subset Y_2
  \subset \cdots$ be an arbitrary countable system of strictly nested subspaces
  in $X$, and fix a numerical sequence $\{d_n\}_{n\ge1}$ satisfying: there exists a natural number $n_0\ge1$ such that
  \begin{equation}
  \label{condition_Borodin}
  d_n >
  \sum_{k = n+1}^\infty d_k~\mbox{for all $n \ge n_0$ at which $d_n > 0$.}
  \end{equation}
  Then there is an element $x \in X$ such that
  \begin{equation}
  \label{rhod}
  \rho(x,Y_n) = d_n, ~\mbox{for all $n\ge1$}.
  \end{equation}
\end{theorem}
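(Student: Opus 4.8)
The plan is to obtain $x$ as the limit of a sequence $(x_n)$ of ``finite realizers,'' each produced by Lemma \ref{lemma1}, and then to read off the infinitely many equalities $\rho(x,Y_n)=d_n$ from a limiting argument. The one soft fact I would use repeatedly is that for every fixed subspace $Y$ the map $u\mapsto\rho(u,Y)$ is $1$-Lipschitz, so that $|\rho(u,Y)-\rho(v,Y)|\le\|u-v\|$. Consequently, if $(x_n)$ converges to $x$ in the (complete) space $X$ and if $\rho(x_n,Y_k)=d_k$ holds for all $n\ge k$, then letting $n\to\infty$ forces $\rho(x,Y_k)=d_k$. Thus the whole difficulty is pushed into building a Cauchy sequence $(x_n)$ whose terms already nail the first distances exactly and stay compatible with their predecessors. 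Completeness of $X$ (the Banach hypothesis) is essential here, exactly as in Borodin's strengthening of Lemma \ref{lemma1} from normed to Banach spaces.

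First I would dispose of the initial segment $n<n_0$ and of any vanishing terms. For the finitely many indices below $n_0$ the problem is genuinely finite, so Lemma \ref{lemma1} applies verbatim to the chain $Y_1\subset\cdots\subset Y_{n_0}$ and a chosen $z\in X\setminus Y_{n_0}$. If the sequence is eventually $0$, say $d_m=0$ for $m\ge N$, then $\rho(x,Y_m)=0$ means $x\in\overline{Y_m}$, and the task reduces to a finite one together with a closure argument; so the substantive case is $d_n>0$ for all $n$, which I assume henceforth. The hypothesis already gives $d_n>\sum_{k>n}d_k\ge d_{n+1}$, so from $n_0$ on the sequence is strictly decreasing and its tails $\sum_{k>n}d_k$ vanish faster than $d_n$ itself; this rapid decay is the quantitative engine of the proof.

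The heart of the argument is the inductive passage from $x_n$, satisfying $\rho(x_n,Y_k)=d_k$ for $k\le n$, to $x_{n+1}$ satisfying the same equalities for $k\le n+1$, with $\|x_{n+1}-x_n\|$ controlled by the tail $\sum_{k>n}d_k$. To keep the already-installed distances from being disturbed, I would freeze, via Hahn--Banach, a norming functional $f_k\in X^*$ with $\|f_k\|=1$, $f_k|_{Y_k}=0$ and $f_k(x_n)=d_k$ at the stage where $d_k$ is first achieved, and then force every later increment to lie in $\bigcap_{k\le n}\ker f_k$; since $\rho(x,Y_k)=\sup\{g(x):\|g\|\le1,\ g|_{Y_k}=0\}$, this secures the lower bounds $\rho(x,Y_k)\ge f_k(x)=d_k$ in the limit. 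The matching upper bounds $\rho(x,Y_k)\le d_k$ would come from the explicit near-best elements supplied by Lemma \ref{lemma1} together with the tail control on $\|x-x_k\|$, while the auxiliary conclusion $x-\lambda z\in Y_n$ of that lemma furnishes the ``reserve direction'' $z$ needed to install the new value $d_{n+1}$ without leaving the prescribed kernels. The main obstacle is precisely this simultaneous demand: the increment must be large enough to set $\rho(\cdot,Y_{n+1})$ equal to $d_{n+1}$, yet small enough and suitably aligned so as not to perturb $\rho(\cdot,Y_k)$ for $k\le n$. This is exactly where the hypothesis $d_n>\sum_{k>n}d_k$ is spent: it provides the slack guaranteeing that a correction of size comparable to $d_{n+1}$ fits inside the gap left by the earlier constraints, so that the corrections are summable, $(x_n)$ is Cauchy, and no earlier equality is destroyed. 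Once $(x_n)$ converges, the limiting principle of the first paragraph delivers $\rho(x,Y_n)=d_n$ for every $n$.
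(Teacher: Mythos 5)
Your overall architecture --- finite realizers supplied by Lemma \ref{lemma1}, plus the observation that $u\mapsto\rho(u,Y)$ is $1$-Lipschitz so that exact equalities survive passage to a limit --- is indeed the skeleton of Borodin's argument (the paper itself does not prove Theorem \ref{thm:Borodin}; it defers to \cite{Borodin1} and only sketches the analogous scheme for the Fr\'echet-space version, Theorem \ref{Equ}). But your proposal has a genuine gap at the one place where all the work lives: the inductive step. The Hahn--Banach device you describe (freezing norm-one functionals $f_k$ vanishing on $Y_k$ with $f_k(x_n)=d_k$ and forcing later increments into $\bigcap_{k\le n}\ker f_k$) preserves only the \emph{lower} bounds $\rho(\cdot,Y_k)\ge f_k(\cdot)=d_k$. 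It does nothing for the upper bounds: if $y_k\in Y_k$ is a near-best approximant to $x_n$ with $\|x_n-y_k\|\approx d_k$, then after adding an increment $u$ you only get $\|x_{n+1}-y_k\|\le d_k+\|u\|$, and there is no reason a point of $Y_k$ at distance exactly $d_k$ from $x_{n+1}$ should exist. So the exact equality $\rho(x_{n+1},Y_k)=d_k$ for $k\le n$ is not secured by your scheme, and the sentence ``this is exactly where the hypothesis $d_n>\sum_{k>n}d_k$ is spent: it provides the slack\dots'' asserts rather than proves precisely the point at issue.

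Borodin's resolution of this difficulty is structurally different from a Cauchy sequence of small corrections. As reflected in the paper's sketch of Theorem \ref{Equ}, one shows that the set $F_n=\{v\in Y_{n+1}:\rho(v,Y_j)=d_j,\ j=1,\dots,n\}$ is nonempty and, crucially, that its elements can be taken of a constrained form $w_n=\sum_{j=1}^n q_{j,n}$ with each $q_{j,n}$ ranging over a \emph{compact} family (a parameter $t_{j,n}\in[0,1]$ times a fixed vector, minus an element of a finite set) and with $\|q_{j,n}\|$ bounded by a tail sum independent of $n$; a diagonal argument then extracts $q_{j,n_k}\to q_j$, one sets $x=\sum_j q_j$, and the Lipschitz limiting principle applied along the subsequence $w_{n_k}\to x$ gives $\rho(x,Y_{j_0})=\lim_k\rho(w_{n_k},Y_{j_0})=d_{j_0}$. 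In other words, consecutive finite realizers are not obtained from one another by controlled perturbations at all; compactness of the admissible forms replaces the Cauchy property you are trying to engineer. To repair your write-up you would either have to adopt this compactness mechanism or prove a strengthened finite lemma guaranteeing that the $(n+1)$-st realizer can be chosen within distance $O\bigl(\sum_{k>n}d_k\bigr)$ of the $n$-th while keeping all $n$ earlier distances exactly --- and that strengthened lemma is the actual content of Borodin's paper, not a consequence of the tail condition alone.
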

The condition (\ref{condition_Borodin}) on the sequence $\{d_n\}$ is the key to the derivation of (\ref{rhod}) in Theorem \ref{thm:Borodin}. Based on this result, Konyagin \cite{Konyagin} in $2013$ takes a further step to show that, for a general non-increasing null sequence $\{d_n\}$, the deviation of $x\in X$ from each subspace $Y_n$ can range in some interval depending on $d_n$.
\begin{theorem}[Konyagin \cite{Konyagin}]
\label{thm:Konyagin}
Let $X$ be a real Banach space, $Y_1 \subset Y_2
  \subset \cdots$  be a sequence of strictly nested closed linear  subspaces of $X$, and $d_1 \geq d_2 \geq \cdots$  be a non-increasing sequence converging to zero, then there exists an element $x\in X$ such that
the distance $\rho(x, Y_n)$ satisfies the inequalities
  \begin{equation}
  \label{bound_Kongyagin}
   d_n \leq \rho(x, Y_n) \leq 8d_n, \,\,\,\mbox{for $n\ge1$}.
   \end{equation}
\end{theorem}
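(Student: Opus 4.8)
The plan is to build the element $x$ as an infinite sum $x=\sum_{k=1}^\infty \lambda_k z_k$, where the $z_k$ are carefully chosen vectors lying in successive layers $Y_{n_k}\setminus Y_{n_{k-1}}$ and the $\lambda_k>0$ are scalars controlling the size of each contribution. The idea is that the distance $\rho(x,Y_n)$ should be governed mainly by the first ``tail'' term that does not already live inside $Y_n$, so that by matching that dominant term to $d_n$ and controlling the remaining tail one can sandwich $\rho(x,Y_n)$ between $d_n$ and a fixed multiple of $d_n$. The reason Borodin's Theorem~\ref{thm:Borodin} only achieves equality under the rapid-decay condition~(\ref{condition_Borodin}) is precisely that the tail $\sum_{k>n}d_k$ must be dominated by $d_n$; for a general non-increasing null sequence this fails, and the price we pay is the loss of exact equality, replaced by the two-sided bound in~(\ref{bound_Kongyagin}).

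The key technical device I would use is Lemma~\ref{lemma1} (in Borodin's Banach-space form), applied not to the original sequence $\{d_n\}$ but to a \emph{sparsified} subsequence. First I would extract indices $n_1<n_2<\cdots$ so that the values $d_{n_k}$ decrease geometrically fast, i.e. $d_{n_{k+1}}\le \tfrac12 d_{n_k}$ (or some fixed ratio). Along such a geometrically decreasing subsequence the rapid-decay hypothesis of Lemma~\ref{lemma1} is automatically satisfied on each finite block, so I can invoke the lemma inductively to produce, at stage $k$, a vector that realizes the prescribed distances at the sparse indices while keeping its norm controlled by roughly $d_{n_k}+1$ (after suitable scaling) and forcing the correction to lie in the next subspace. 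The bound $\|x\|\le d_1+1$ and the ``$x-\lambda z\in Y_n$'' clause in the lemma are exactly what let me splice successive corrections together so the partial sums form a Cauchy sequence, yielding a convergent $x\in X$.

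After constructing $x$, the crux is to estimate $\rho(x,Y_n)$ for \emph{all} $n$, not merely the sparse indices where the distances were prescribed exactly. For a general $n$, I would locate the block $n_{k}\le n<n_{k+1}$ containing it and argue two inequalities. The lower bound $\rho(x,Y_n)\ge d_n$ follows because the best approximation from $Y_n$ cannot do better than from the larger space $Y_{n_{k+1}-1}$ where the distance is pinned down, combined with monotonicity of $\{d_n\}$. The upper bound $\rho(x,Y_n)\le 8 d_n$ is the delicate part: one approximates $x$ by truncating the series at an appropriate level, and bounds the discarded tail by a geometric series whose sum is comparable to $d_{n_k}$, which in turn is comparable to $d_n$ because $n$ lies in the same block. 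The constant $8$ emerges from accumulating the geometric ratio, the $d_1+1$ norm slack in the lemma, and the loss in passing between $d_n$ and the nearest sparse value $d_{n_k}$.

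The main obstacle I anticipate is controlling the interplay between the exact-distance clause of Lemma~\ref{lemma1} and the convergence of the infinite splicing: each application of the lemma perturbs $x$ by a term living in a higher subspace, and one must verify that these perturbations do not destroy the distances already fixed at lower indices. This is what the condition $x-\lambda z\in Y_n$ is engineered to guarantee, but tracking it across infinitely many stages while simultaneously keeping the norms summable, and then propagating the two-sided estimate from the sparse indices to \emph{every} $n$ so as to pin down the universal constant $8$, is where the real bookkeeping lies.
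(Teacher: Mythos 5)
Your overall strategy (sparsify, pin down exact distances via a Borodin-type construction along the sparse indices, interpolate by monotonicity) is indeed the strategy this survey uses for the closely related Theorem~\ref{Konyagin}; note that the paper does not actually prove Theorem~\ref{thm:Konyagin} itself but only quotes it from \cite{Konyagin}. However, as written your argument has a genuine gap in the interpolation step, specifically in the lower bound. If you choose the sparse indices greedily so that $d_{n_{k+1}}\le\tfrac12 d_{n_k}$ and prescribe $\rho(x,Y_{n_k})=d_{n_k}$, then for $n_k<n<n_{k+1}$ monotonicity only gives $\rho(x,Y_n)\ge\rho(x,Y_{n_{k+1}})=d_{n_{k+1}}$, and $d_{n_{k+1}}$ lies \emph{below} $d_n$, by a factor that is unbounded over a general non-increasing null sequence. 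Concretely, take $d_1=d_2=1$, $d_3=\varepsilon$: your sparse indices are $n_1=1$, $n_2=3$, and the construction constrains $\rho(x,Y_2)$ only to the interval $[\varepsilon,1]$, whereas the theorem demands $\rho(x,Y_2)\ge d_2=1$. Nothing in Lemma~\ref{lemma1} or in Borodin's construction forces the distance at the unprescribed intermediate index to sit at the top of its admissible range. (Your upper bound at intermediate indices is fine with this sparsification, since $d_n>\tfrac12 d_{n_k}$ there gives $\rho(x,Y_n)\le d_{n_k}<2d_n$; it is the lower bound that breaks.)

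The repair is to organize the sparsification by dyadic \emph{value levels} of $\{d_n\}$ rather than by extracting a geometrically decreasing subsequence of its values: group the indices into blocks on which $d_n$ lies between consecutive powers of $2$, and prescribe grid values from a fixed geometric sequence at the relevant block-boundary subspaces, so that every $n$ is within a bounded factor (in value) of a prescribed index on \emph{both} sides. This is exactly what the proof of Theorem~\ref{Konyagin} in the paper does, and it is why that theorem needs the extra hypothesis that the chain $\{Y_n\}$ extends to a chain $\{\tilde Y_n\}$ carrying a full geometric sequence $\{K2^{-n}\}$ of prescribed values --- without additional subspaces to hang the skipped dyadic levels on, the two-sided interpolation does not close with a universal constant, and Konyagin's own proof has to work harder than a black-box application of Theorem~\ref{thm:Borodin}. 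A second, smaller issue: a ratio-$2$ geometric sequence satisfies $d_n=\sum_{k>n}d_k$, so it meets the weakened condition~(\ref{condition}) but \emph{not} Borodin's strict condition~(\ref{condition_Borodin}); you must either use a ratio strictly greater than $2$ (which inflates the final constant) or invoke the strengthened Theorem~\ref{Borodin}. Tracking these losses is precisely where the constant $8$ comes from, and your sketch does not yet account for them.
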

Note that the condition (\ref{condition_Borodin}) is satisfied when $d_n =(2+\epsilon)^{-n} $ for $\epsilon > 0$ arbitrarily small, however it is not satisfied when $d_n= 2^{-n}$. Of course there are two natural questions to ask:
 \begin{description}
   \item[\textbf{Question $1$}] Is the condition (\ref{condition_Borodin}) necessary for the results in Theorem \ref{thm:Borodin} to hold, or does Theorem \ref{thm:Borodin} still hold for the sequence $d_n=2^{-n}$, $n\ge1$?
   \item[\textbf{Question $2$}] Under the same conditions given in Theorem  \ref{thm:Konyagin}, can the lower and upper bounds of $\rho(x, Y_n)$ in (\ref{bound_Kongyagin}) be improved?
 \end{description}
 Both of these questions are answered  and  two improvements on a theorem of S. N. Bernstein for Banach spaces are presented in \cite{Ak-Pe2}.   A positive answer to Question $1$ is obtained in \cite{Ak-Pe2} by showing that Theorem \ref{thm:Borodin} can be extended by weakening the strict inequality in (\ref{condition_Borodin}) to a non-strict one:
\begin{equation}
 \label{condition}
 d_n\ge\sum_{k=n+1}^\infty d_k,~\mbox{for every} \,\, n \ge n_0.
 \end{equation}
 Clearly, the condition  (\ref{condition}) is weaker than (\ref{condition_Borodin}),  but unlike the condition (\ref{condition_Borodin}), (\ref{condition}) is satisfied by the sequences $\{d_n\}_{n\ge1}$ verifying $d_n=\sum\limits_{k=n+1}^\infty d_k$ for all $n\ge n_0$. For a typical example of such a sequence one can take $\{d_n\} =\{2^{-n}\}$.  
 We have also  shown that if $X$ is an arbitrary infinite-dimensional Banach space,  $\{Y_n\}$ is a sequence of strictly nested subspaces of $ X$,  and if   $\{d_n\}$ is a non-increasing sequence of non-negative numbers tending to 0, then for any $c\in(0,1]$
  we can find  $x_{c} \in X$,    such that the distance $\rho(x_{c}, Y_n)$ from $x_{c}$ to $Y_n$ satisfies
$$
c d_n \leq \rho(x_{c},Y_n) \leq 4c d_n,~\mbox{for all $n\in\mathbb N$}.
$$
We prove the above inequality by first improving Borodin's (2006) result for Banach spaces  by weakening his condition on the sequence $\{d_n\}$.  The weakened condition on $d_n$ requires refinement of Borodin's construction to extract  an element in $X$, whose distances from the nested subspaces are  precisely the given values $d_n$.

Now, we are ready to state the following  theorem \cite{Ak-Pe2} which  improves the theorem  of Borodin \cite{Borodin1}.
\begin{theorem}[Aksoy, Peng \cite{Ak-Pe2} ]
\label{Borodin}
Let $X$ be an arbitrary infinite-dimensional Banach space, $\{Y_n\}_{n\ge1}$ be an arbitrary system of strictly nested subspaces with the property $\overline Y_n \subset Y_{n+1}$ for all $n\ge1$, and let the non-negative numbers $\{d_n\}_{n\ge1}$ satisfy the following property: there is an integer $ n_0 \ge 1 $ such that
$$
 d_n\ge\sum_{k=n+1}^\infty d_k,~\mbox{for every} \,\, n \ge n_0.
$$
 Then, there exists an element $x\in X$ such that $\rho(x,Y_n)=d_n$ for all $n\ge1$.
\end{theorem}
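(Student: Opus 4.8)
The plan is to build the desired element $x$ as the limit of a Cauchy sequence of partial constructions, closely following Borodin's original argument but modifying the bookkeeping to accommodate the weakened summability hypothesis $d_n \ge \sum_{k=n+1}^\infty d_k$ in place of the strict inequality. First I would dispose of the tail of the sequence: the condition is only assumed for $n \ge n_0$, so I would first apply Lemma \ref{lemma1} (in Borodin's Banach-space form) to the finite nested system $Y_1 \subset \cdots \subset Y_{n_0}$ with the prescribed values $d_1 > \cdots > d_{n_0}$, obtaining a base element and a reference vector $z \in X \setminus \overline{Y_{n_0}}$; the hypothesis $\overline{Y}_n \subset Y_{n+1}$ guarantees such a $z$ exists and, crucially, that distances to $Y_n$ and to its closure coincide, which keeps the infimum $\rho(x,Y_n)$ well behaved.

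The heart of the matter is the inductive step. I would construct vectors $x_n \in X$ so that $\rho(x_n, Y_k) = d_k$ for $k = 1, \ldots, n$, with $x_{n+1} - x_n$ small and lying in $Y_{n+1}$ (or a controlled perturbation thereof), so that the errors already achieved at levels $\le n$ are preserved as we go. At each stage I would invoke Lemma \ref{lemma1} on the finite system up to level $n+1$ to adjust the distance to $Y_{n+1}$ to exactly $d_{n+1}$, using the guaranteed vector $z$ to move transversally to $Y_n$. The norm bound $\|x\| \le d_1 + 1$ and the conclusion $x - \lambda z \in Y_n$ supplied by the lemma are exactly what control the size of the increments: I expect the increment $\|x_{n+1} - x_n\|$ to be bounded by a quantity of order $d_{n+1} + \sum_{k \ge n+2} d_k$, and here the weakened condition $d_n \ge \sum_{k=n+1}^\infty d_k$ does precisely enough work to make $\sum_n \|x_{n+1}-x_n\|$ converge, so that $x = \lim_n x_n$ exists in the Banach space $X$.

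The main obstacle, and the real point of departure from Borodin, is verifying that the limit $x$ actually attains $\rho(x, Y_n) = d_n$ for every $n$ rather than merely $\rho(x,Y_n) \le d_n$ or $\rho(x,Y_n)$ landing in a range. The upper bound $\rho(x, Y_n) \le d_n$ should follow by passing to the limit and using that the tail increments live in the larger subspaces. The lower bound is the delicate direction: I would argue by contradiction, supposing $\rho(x, Y_n) < d_n$ for some $n \ge n_0$, which would produce an approximant $y \in Y_n$ with $\|x - y\| < d_n$; combining this with the fact that $x$ differs from $x_n$ only by a tail of size at most $\sum_{k=n+1}^\infty d_k \le d_n$, and tracking how $x_n$ was forced to sit at distance exactly $d_n$ via the transversal vector $z$, should yield a contradiction. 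The borderline equality case $d_n = \sum_{k=n+1}^\infty d_k$ (realized by $d_n = 2^{-n}$) is exactly where the estimates become tight, so the careful step is to check that the triangle-inequality bookkeeping has no slack to spare yet does not overshoot; keeping the construction so that each correction is orthogonal-in-direction to the previously fixed subspace (via $z$) is what I expect to save the argument in this tight regime.
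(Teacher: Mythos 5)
Your overall strategy --- iterate Lemma \ref{lemma1} to build approximants $x_n$ with $\rho(x_n,Y_k)=d_k$ for $k\le n$ and pass to the limit in the Banach space --- is the same one the paper attributes to \cite{Ak-Pe2} (the paper itself presents no proof of Theorem \ref{Borodin}, deferring the ``technical lemmas'' to that reference). But your proposal has a genuine gap precisely where those technical lemmas live: the inductive step. Lemma \ref{lemma1} produces \emph{some} element with the prescribed distances to $Y_1,\dots,Y_{n+1}$ and norm at most $d_1+1$; it says nothing about how close that element can be taken to the previously constructed $x_n$, and adding a small correction from $Y_{n+1}$ does not preserve the distances to $Y_1,\dots,Y_n$ already achieved (only corrections from $Y_1$ would do that). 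Your claim that $\|x_{n+1}-x_n\|$ is ``of order $d_{n+1}+\sum_{k\ge n+2}d_k$'' is asserted, not derived, and it is exactly this proximity estimate --- with zero slack available in the borderline case $d_n=\sum_{k>n}d_k$ --- that forces the refinement of Borodin's construction (in particular a sharpened form of Lemma \ref{lemma1} with norm control $d_1+\varepsilon$ in place of $d_1+1$ and with the new approximant tied to the old one). Note also that Lemma \ref{lemma1} as stated requires $d_1>d_2>\dots>d_n\ge0$ strictly decreasing, whereas the theorem allows ties and vanishing tails; these cases need separate handling.

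The second problem is your treatment of the limit. If the $x_n$ do converge to $x$ with $\rho(x_n,Y_k)=d_k$ for all $n\ge k$, then $\rho(x,Y_k)=d_k$ follows immediately from the $1$-Lipschitz continuity of $y\mapsto\rho(y,Y_k)$; no contradiction argument is needed, and the one you sketch does not close: from $\rho(x_n,Y_n)=d_n$, $\|x-x_n\|\le\sum_{k>n}d_k\le d_n$, and a hypothetical $y\in Y_n$ with $\|x-y\|<d_n$, the triangle inequality yields only $d_n\le\|x_n-y\|<2d_n$, which is perfectly consistent. The appeal to corrections ``orthogonal in direction'' to the fixed subspaces is not available in a general Banach space and cannot rescue this estimate. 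In short, the delicate point is not the passage to the limit but the construction of a Cauchy sequence of approximants carrying the \emph{exact} prescribed distances, and that is the part your proposal leaves unproved.
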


Proof of this theorem depends on some technical lemmas, details can be found in \cite{Ak-Pe2}. Next we state an improvement of the Konyagin's result.
\begin{theorem} [Aksoy, Peng \cite{Ak-Pe2}]
\label{Konyagin}
Let $X$ be an infinite-dimensional Banach space, $\{Y_n\}$ be a system of strictly nested subspaces of $X$ satisfying the condition $\overline{Y}_n \subset Y_{n+1}$ for all $n\ge1$. Let $\{d_n\}_{n\ge1}$ be a non-increasing null sequence of strictly positive numbers. Assume that there exists an extension $\{(\tilde d_n,\tilde Y_n)\}_{n\ge1}\supseteq \{(d_n,Y_n)\}_{n\ge 1}$ satisfying: $\{\tilde d_n\}_{n\ge1}$ is a non-increasing null sequence of strictly positive values, $\overline{\tilde Y}_n\subset \tilde Y_{n+1}$ for $n\ge1$; and there is an integer $i_0\ge1$ and a constant $K>0$ such that
$$
\{K2^{-n}\}_{n\ge i_0}\subseteq \{\tilde d_n\}_{n\ge 1}.
$$ Then, for any $c\in(0,1]$, there exists an element $x_c\in X$ (depending on $c$) such that
\begin{equation}
\label{xc}
cd_n\le \rho(x_c,Y_n)\le 4c d_n,~\mbox{for $n\ge1$}.
\end{equation}
\end{theorem}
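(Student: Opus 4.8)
The plan is to discard the general (possibly slowly decreasing) sequence in favor of an exact geometric scaffold and then invoke the improved Borodin theorem, Theorem \ref{Borodin}. Two reductions come first. The constant $c$ is cosmetic: since each $Y_n$ is a subspace, $\rho(\lambda x,Y_n)=|\lambda|\,\rho(x,Y_n)$, so once I produce one $x$ with $d_n\le\rho(x,Y_n)\le 4d_n$, the vector $x_c:=cx$ satisfies $cd_n\le\rho(x_c,Y_n)\le 4cd_n$. Next, because the hypothesis supplies an extension $\{(\tilde d_n,\tilde Y_n)\}\supseteq\{(d_n,Y_n)\}$ with the same structural properties, it suffices to solve the problem on the extended system: if $\tilde d_n\le\rho(x,\tilde Y_n)\le 4\tilde d_n$ holds for all $n$, then restricting to the sub-collection of indices at which $(\tilde d_n,\tilde Y_n)=(d_k,Y_k)$ yields the original bounds. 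Hence I may rename and assume from the outset that $\{K2^{-n}\}_{n\ge i_0}\subseteq\{d_n\}_{n\ge1}$.

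Second, I extract the scaffold. For each $j\ge i_0$ choose an index $m_j$ with $d_{m_j}=K2^{-j}$; since $\{d_n\}$ is non-increasing and the values $K2^{-j}$ are strictly decreasing, the $m_j$ may be taken strictly increasing with $m_j\to\infty$. Consider the chain made of the finite head $Y_1\subset\cdots\subset Y_{m_{i_0}-1}$ followed by the scaffold $Y_{m_{i_0}}\subset Y_{m_{i_0+1}}\subset\cdots$; being a subsequence of $\{Y_n\}$ it is strictly nested and inherits $\overline Y_p\subset Y_q$ for $p<q$. Assign to it the values $d_1,\dots,d_{m_{i_0}-1}$ followed by $K2^{-i_0},K2^{-(i_0+1)},\dots$. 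On the scaffold part these values are exactly geometric, so $K2^{-j}=\sum_{\ell>j}K2^{-\ell}$; that is, the non-strict condition (\ref{condition}) of Theorem \ref{Borodin} holds with equality from the first scaffold index onward, the unconstrained head being irrelevant to it. This is precisely where the improved Borodin theorem is indispensable: the strict condition (\ref{condition_Borodin}) fails for an exact geometric sequence, whereas (\ref{condition}) is satisfied with equality. Applying Theorem \ref{Borodin} to this chain produces $x\in X$ with $\rho(x,Y_n)=d_n$ for $n<m_{i_0}$ and $\rho(x,Y_{m_j})=K2^{-j}$ for every $j\ge i_0$.

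Third, I control the unprescribed indices by monotonicity and finish with a single dilation. For $n\ge m_{i_0}$ pick $j\ge i_0$ with $m_j\le n\le m_{j+1}$. Nestedness gives $\rho(x,Y_{m_{j+1}})\le\rho(x,Y_n)\le\rho(x,Y_{m_j})$, hence $\rho(x,Y_n)\in[K2^{-(j+1)},K2^{-j}]$, while monotonicity of $\{d_n\}$ forces $d_n$ into the same interval $[K2^{-(j+1)},K2^{-j}]$, whose endpoints differ by a factor $2$. It follows that $\tfrac12 d_n\le\rho(x,Y_n)\le 2d_n$ for all $n\ge m_{i_0}$, and $\rho(x,Y_n)=d_n$ for $n<m_{i_0}$. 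Setting $x':=2x$ converts the symmetric window $[\tfrac12 d_n,2d_n]$ into $[d_n,4d_n]$ and keeps the head inside $[d_n,4d_n]$ as well, so $d_n\le\rho(x',Y_n)\le 4d_n$ for every $n$; combined with the first step, $x_c:=c\,x'$ is the required element.

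The three delicate points are (a) realizing that one must prescribe $\rho$ only on the geometric subsequence plus a finite head, never on all of $\{Y_n\}$, since on a slowly decreasing stretch no sequence comparable to $\{d_n\}$ can satisfy the summability condition; (b) checking that the non-strict inequality (\ref{condition}) is exactly what makes the scaffold $K2^{-j}$ admissible in Borodin's construction; and (c) the final factor-$2$ dilation, which is the sole source of the constant $4$. I expect (a) to be the main obstacle: the real work is the bookkeeping that packages the extension and the head into a genuine strictly nested chain to which Theorem \ref{Borodin} applies, while leaving every remaining index to be squeezed into a factor-$2$ window by monotonicity alone.
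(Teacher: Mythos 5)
Your proposal is correct and follows essentially the same route as the paper: pass to the extended system, extract the geometric subsequence, apply Theorem \ref{Borodin} (whose non-strict condition (\ref{condition}) is exactly what admits the sequence $K2^{-j}$) to the head-plus-scaffold chain, squeeze the remaining indices into a factor-$4$ window by monotonicity, and rescale. Your bookkeeping at the intermediate indices ($d_n\in[K2^{-(j+1)},K2^{-j}]$, hence $\tfrac12 d_n\le\rho(x,Y_n)\le 2d_n$ and a final dilation by $2$) is in fact the corrected form of the paper's Case 2, which places $\tilde d_n$ in $[K2^{-i},K2^{-i+1}]$ and therefore dilates by $4$; both versions arrive at the same conclusion (\ref{xc}).
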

\textbf{Proof.} We first show (\ref{xc}) holds for $c=1$.

By assumption, there is a subsequence $\{n_i\}_{i\ge i_0}$ of $\mathbb N$ such that
$$
\tilde d_{n_i}=K2^{-i},~\mbox{for $i\ge i_0$}.
$$
Since the sequence $\{\tilde d_n\}_{n=1,2,\ldots,n_{i_0}-1}\cup\{\tilde d_{n_i}\}_{i\ge i_0}$ satisfies the condition (\ref{condition}) and $\overline{\tilde Y}_{n}\subset \tilde Y_{n+1}$ for all $n\ge1$, then we can apply Theorem \ref{Borodin} to get $x\in X$ so that
\begin{equation}
\label{distYd}
\rho(x,\tilde Y_n)=\tilde d_n,~\mbox{for $n=1,\ldots,n_{i_0}-1$},~\mbox{and}~\rho(x,\tilde Y_{n_i})=\tilde d_{n_i},~\mbox{for all $i\ge i_0$}.
\end{equation}
Therefore for any integer $n\ge1$,
\begin{description}
\item[Case 1] if $n\le n_{i_0}-1$ or $n=n_i$ for some $i\ge i_0$,  then  it follows from (\ref{distYd}) that
$$
\rho(x,\tilde Y_n)= \tilde d_n;
$$
\item[Case 2] if $n_i<n< n_{i+1}$ for some $i\ge i_0$, then the facts that $\{\tilde d_n\}$ is non-increasing and $\tilde Y_{n_i}\subset \tilde Y_{n}\subset \tilde Y_{n_{i+1}}$ lead to
$$
\rho(x,\tilde Y_n)\in\left(\rho(x,\tilde Y_{n_{i+1}}),\rho(x,\tilde Y_{n_i})\right)=\left(K2^{-(i+1)},K2^{-i}\right)
$$
and
$$
\tilde d_n\in\left[K2^{-i},K2^{-i+1}\right].
$$
It follows that
$$
\frac{\rho(x,\tilde Y_n)}{\tilde d_n}\in\left(\frac{K2^{-i-1}}{K2^{-i+1}},\frac{K2^{-i}}{K2^{-i}}\right)=\left(\frac{1}{4},1\right).
$$
\end{description}
Putting the above cases  together  yields
$$
\frac{1}{4}\tilde d_n\le \rho(x,\tilde Y_n)\le \tilde d_n~\mbox{for all $n\ge1$}.
$$
For $c\in(0,1]$, taking $x_c=4cx$ in the above inequalities, we obtain
$$
c\tilde d_n\le \rho(x_c,\tilde Y_n)\le 4c\tilde d_n,~\mbox{for all $n\ge1$}.
$$
Remembering that $\{(d_n,Y_n)\}_{n\ge1}\subseteq\{(\tilde d_n,\tilde Y_n)\}_{n\ge1}$, we then necessarily have
\begin{equation*}
\label{notequal}
c d_n\le \rho(x_c, Y_n)\le 4c d_n,~\mbox{for all $n\ge1$}.
\end{equation*}
Hence Theorem \ref{Konyagin} is proved. 

\begin{remark}
 Taking $c=\displaystyle \frac{1}{4}$ in Theorem \ref{Konyagin}, we obtain existence of $x\in X$ for which
  $$
\frac{1}{4}\le \frac{\rho(x,Y_n)}{d_n}\le 1,~\mbox{for all}~n\ge1.
$$
The interval length $\displaystyle \frac{3}{4}$ makes $\big[\displaystyle \frac{1}{4},1\big]$ the \enquote{narrowest} estimating interval of $\displaystyle \frac{\rho(x,Y_n)}{d_n}$ that Theorem \ref{Konyagin} could provide.
\end{remark}
 
 The subspace condition given in Theorem \ref{Konyagin} states that the nested sequence $\{Y_n\}$ has \enquote{enough gaps} so that the sequence $$\{(d_n',Y_n')\}_{n\ge1}=\{(d_n,Y_n)\}_{n\ge1}\cup\{(K2^{-i},\tilde Y_{n_i})\}_{i\ge i_0}$$ satisfies $d_n'\ge d_{n+1}'\to 0$ and $\overline {Y_n'}\subset Y_{n+1}'$ for all $n\ge1$.  For another subspace condition see the results in \cite{Ak-Pe}.
 
 Observe that  in Konyagin's paper \cite{Konyagin} it is assumed that $\{Y_n\}$ are closed
and strictly increasing. In Borodin's paper \cite{Borodin1}, this is not specified, but from the proof of Theorem \ref{thm:Borodin} it is clear that his proof  works only under the assumption that $\overline{Y}_n $ is strictly included
in $Y_{n+1}$. 
The subspace condition $\overline{ Y_n} \subset Y_{n+1}$ does not come at the expense of our assumption to weaken the condition  on  the sequence $d_n$. This  is a natural condition. To clarify the reason why almost all Lethargy theorems have this condition on the subspaces,  we give the following example.
\begin{example}\label{EXAM}
Let $X = L^{\infty}[0,1]$ and consider $C[0,1] \subset L^{\infty} [0,1]$ and define the subspaces  of $X$ as follows:

\begin{enumerate}
\item $Y_1 =  \mbox{space of all polynomials}$;

\item $Y_{n+1} = $span$[Y_n \cup \{f_{n}\}]$
where $f_{n} \in C[0,1] \setminus Y_n$, for $n\ge1$.
\end{enumerate}
Observe that by the Weierstrass Theorem we have $\overline {Y}_n = C[0,1] $ for all $n \ge1$.
Take any  $ f \in L^{\infty}[0,1]$ and consider the following cases:
\begin{enumerate}
\renewcommand{\labelenumi}{\alph{enumi})}
\item If  $ f \in C[0,1]$ , then
$$
\rho (f,Y_n) = \rho(f,C[0,1]) = 0~\mbox{for all $n\ge1$}.
$$
\item If $ f \in L^{\infty}[0,1] \setminus C[0,1] $, then
$$
\rho(f, Y_n) = \rho(f, C[0,1]) = d > 0~ \mbox{ (independent of $n$)}.
$$
Note that in above, we have used the fact that $\rho(f,Y_n) = \rho(f,\overline{Y}_n)$.
Hence in this case BLT does not hold.
\end{enumerate}
\end{example}

\section{Extension of BLT to Fr\'{e}chet  spaces}


Fr\'{e}chet  spaces are locally convex spaces that are complete with respect to a translation invariant metric, and  they are generalization of Banach spaces which are normed linear spaces,  complete with respect to the metric induced by the norm.  However, there are metric spaces which are not normed spaces. This can be easily seen by considering the space  $s$  the set of all sequence $x=(x_n)$ and defining $ d(x,y) =\displaystyle \sum_{i=1}^{\infty}  2^{-i} \displaystyle \frac{|x_i-y_i|}{(1+ |x_i-y_i|)}$  as a metric on $s$.
If we let  $\lambda \in \mathbb{R}$ then
$$d( \lambda x, \lambda y) = \sum_{i=1}^{\infty} \frac{1}{2^i} \frac{| \lambda |  |x_i-y_i|}{(1+| \lambda | |x_1-y_i|)} \neq|\lambda| d(x,y)\quad (\mbox{no homogeneity)}.$$

\begin{definition}

 $(X, \| \cdot \|_F)$ is called a \textit{Fr\'{e}chet  space}, if it is a metric linear space which is complete with respect  to 
 its F-norm $||.||_F$ giving the topology. An  F-norm  $||.||_F$   satisfies the following conditions  \cite{Rol}:
 \begin{enumerate}
 \item $||x||_F =0$ if and only if $x=0$,
 \item $||\alpha x||_F = ||x||_F$ for all real or complex  $\alpha$  with $|\alpha|=1$ (no homogeneity),
 \item $||x+y||_F \leq ||x||_F+||y||_F$,
 \end{enumerate}
 
 \end{definition}
Many Fr\'{e}chet spaces  $X$ can also be constructed using  a countable family of semi-norms $||x||_k$ where $X$ is a complete space with respect to this family of semi-norms. For example a translation invariant complete metric inducing the topology on $X$ can be defined as $$d(x,y) = \displaystyle \sum_{k=0}^{\infty} 2^{-k} \displaystyle \frac{||x-y||_k}{1+||x-y||_k}\,\,\mbox{ for}\,\,\, x,y \in X.$$
Clearly, every Banach space is a Fr\'{e}chet space, and the other well known example of a Fr\'{e}chet space is the vector space $C^{\infty}[ 0,1]$ of all infinitely differentiable functions $f:[0,1] \to \mathbb{R}$ where the semi-norms are $$||f||_k = \sup \{|f^{(k)} (x) |: \,\, x\in [0,1]\}.$$ For more information about Fr\'echet spaces the reader is referred to \cite{Rol} and \cite{Kal}.

 Recently in \cite{Ak-Le2}  a version of  Bernstein Lethargy Theorem (BLT) was given for Fr\'{e}chet spaces.  More precisely, let $X$ be an infinite-dimensional Fr\'echet space and let $\mathcal{V}=\{V_n\}$ be a nested sequence of subspaces of $ X$ such that $ \overline{V_n} \subseteq V_{n+1}$ for any $ n \in \mathbb{N}$. 
Let $ d_n$ be a decreasing sequence of positive numbers tending to 0. Under an additional natural condition on $\sup\{\mbox{dist}(x, V_n)\}$, we proved that there exists $ x \in X$ and 
$ n_o \in \mathbb{N}$   such that
\[
\frac{d_n}{3} \leq \rho (x,V_n) \leq 3 d_n,
\]
for any $ n \geq n_o$. By using the above theorem,  it is also possible to obtain an extension of  both Shapiro's \cite{Sha} and  Tyuremskikh's \cite{Tyu}  theorems for  Fr\'{e}chet  spaces as well.   

\begin{notation}
Let $(X, \|.\|_F)$ be a Fr$\acute{e}$chet  space and assume that $\mathcal{V} = \{ V_n\}$ is a nested sequence of linear subspaces of $X$ satisfying
$\overline{V_n} \subset V_{n+1}$.  Let  $d_{n, \mathcal{V}} $  denote the deviation of $V_n$ from $V_{n+1}$ defined as:

 \begin{equation}
\label{dn}
d_{n, \mathcal{V}} = \sup \{ \rho(v, V_n) : v \in V_{n+1}\}
\end{equation}
\end{notation}
and 
throughout this paper we assume:
 \begin{equation}
 \label{nondegenerate}
d_{\mathcal{V}} = \inf\{ d_{n,\mathcal{V}}: n \in \mathbb{N} \} > 0.
\end{equation}
The necessity of this assumption is illustrated in the following example.
\begin{example}
 Let 
$ 
X  = \{ (x_n): x_n \in \mathbb{R} \hbox{ for any }  n \in \mathbb{N}\} 
$
equipped with the $F$-norm $ \| x\|_F = \displaystyle \sum_{j=1}^{\infty} \frac{|x_j|}{2^j(1+ |x_j|)}, $ where $ x=(x_1,...,x_j,..).$ Let 
$$ 
V_n = \{ x \in X: x_k=0 \hbox{ for } k \geq n+1 \}. 
$$
It is easy to see that for any $ x \in X$ 
$$ 
\rho(x,V_n) = \sum_{j=n+1}^{\infty} \frac{|x_j|}{2^j(1+ |x_j|)}  \leq \frac{1}{2^n}.
$$
Let $ d_n = \displaystyle \frac{2}{n}$ and observe that for any $ x \in X,$ 
$
\rho(x,V_n) \leq  \displaystyle \frac{1}{2^n}< \displaystyle \frac{2}{n}.
$ 
Also observe that $d_{n, {\mathcal V}} =\displaystyle  \frac{1}{2^{n+1}} $ which implies that $ d_{{\mathcal V}}=0.$ 
\end{example}
 Thus in  the case when  $ d_{{\mathcal V}}=0$,  we cannot even hope to prove Shapiro's theorem. Above example  also shows that it is impossible to prove the Tyuriemskih Theorem or Konyagin's  type result  in Fr\'echet spaces without additional assumptions, because they are stronger statements than Shapiro's theorem. 
Note that if $X$ is a Banach space,  then the  condition $d_{\mathcal{V}} = \inf\{ d_{n,\mathcal{V}}: n \in \mathbb{N} \} > 0
$ is satisfied automatically. It can be seen easily that $d_{n, \mathcal{V}} = + \infty$ for Banach spaces. because
$$\rho(tx, V_n) = t\rho(x, V_n)$$ and  the supremum taking over all $v\in V_{n+1}$ and $V_n$ is strictly included in $V_{n+1}$.
The next,  example illustrates that  there is a natural way to build Fr\'{e}chet  spaces where  $d_{n, \mathcal{V}} =1$ .
\begin{example}
Let $(X, \|.\|)$ be a Banach space. Define in X an F-norm $ \| \| _F$ by:
$\|x\|_F = \|x\|/(1+\|x\|)$. Then $d_{n, \mathcal{V}} =1$ for any $n \in \mathbb{N}$ independently of
$\mathcal{V}$. Because the mapping $$t \mapsto \displaystyle \frac{t}{1+t}$$ is increasing for $t> -1$ and 
$$\rho_{F} (tx, V_n) =\displaystyle \frac{\rho(tx, V_n)}{1+\rho(tx, V_n)} \rightarrow 1$$ as $t\rightarrow \infty$.

\end{example}
\begin{theorem}[Aksoy, Lewicki, \cite{Ak-Le2}]\label{Equ}
Let $X$ be a Fr\'{e}chet space  and and assume that $\mathcal{V} = \{ V_n\}$ is a nested sequence of linear subspaces of $X$ satisfying $\overline{V_n} \subseteq V_{n+1},$ where the closure is taken with respect to $ \| \cdot \|_F .$ Let  $d_{n, \mathcal{V}}$ be  defined as above and $\{e_n\}$ be a decreasing sequence  of positive numbers satisfying 
$$
\sum_{j=n}^{\infty} 2^{j-n} ( \delta_j+e_j) < \min \{d_{n,\mathcal{V}}, e_{n-1}\}
$$ 
with a fixed sequence of positive numbers $\delta_j$. Then, there exists $x\in X$ such that $$\rho(x, V_n)= e_n\quad \mbox{ for any} \quad n \in \mathbb{N}.$$

\end{theorem}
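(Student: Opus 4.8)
The plan is to construct the desired element as a convergent series $x = \sum_{k=1}^{\infty} x_k$ with $x_k \in V_{k+1}$, chosen recursively, and then to read off the distances $\rho(x,V_n)$ by splitting the series at level $n$. The governing principle is that, since each $x_k$ lives in $V_{k+1}$, the partial sum $s_{n-1} = \sum_{k=1}^{n-1} x_k$ belongs to $V_n$ and therefore contributes nothing to the distance; consequently $\rho(x,V_n) = \rho\left(x_n + \sum_{k>n} x_k, V_n\right)$, so that the deviation at level $n$ is governed by the single ``new'' term $x_n$, up to the perturbation caused by the tail $r_n = \sum_{k>n} x_k$. The whole difficulty is concentrated in choosing the $x_k$ so that $x_n$ produces distance exactly $e_n$ while the tail $r_n$ stays small enough not to destroy this.

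First I would establish a distance-realization sublemma: for each $n$ and each target $t$ with $0 < t < d_{n,\mathcal{V}}$ there is a vector $v \in V_{n+1}$ with $\rho(v, V_n) = t$. By the definition (\ref{dn}) of $d_{n,\mathcal{V}}$ one can pick $v_0 \in V_{n+1}$ with $\rho(v_0, V_n) > t$; the map $s \mapsto \rho(s v_0, V_n)$ is continuous on $[0,1]$ (scalar multiplication is continuous for the F-norm, and $\rho(\cdot,V_n)$ is $1$-Lipschitz in $\|\cdot\|_F$) and runs from $0$ to $\rho(v_0,V_n) > t$, so the intermediate value theorem yields $s^*$ with $\rho(s^* v_0, V_n) = t$. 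This is precisely the step where the non-homogeneity of $\|\cdot\|_F$ must be absorbed: I cannot rescale a fixed witness to the desired length, but I can still slide continuously through all intermediate distances below the ceiling $d_{n,\mathcal{V}}$. Note the hypothesis forces $e_n \le \delta_n + e_n < d_{n,\mathcal{V}}$, so the target $e_n$ is always admissible.

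With the sublemma in hand I would run the recursive construction. Having fixed $x_1, \dots, x_{n-1}$, I choose $x_n \in V_{n+1}$ via the sublemma so that $\rho(x_n, V_n)$ hits a corrected target near $e_n$ (within $\delta_n$), while keeping $\|x_n\|_F$ comparable to $e_n + \delta_n$. Subadditivity of the F-norm together with the weighted hypothesis makes $\sum_k \|x_k\|_F$ summable, so by completeness of $X$ the series converges to some $x \in X$. To verify $\rho(x,V_n) = e_n$ I would estimate the tail: from $\|r_n\|_F \le \sum_{k>n} \|x_k\|_F$ and the bound $\sum_{j=n}^{\infty} 2^{j-n}(\delta_j + e_j) < \min\{d_{n,\mathcal{V}}, e_{n-1}\}$, the tail at level $n$ is strictly smaller than the room that $e_n$ occupies, so the two-sided estimate $\rho(x_n,V_n) - \|r_n\|_F \le \rho(x,V_n) \le \rho(x_n,V_n) + \|r_n\|_F$ confines $\rho(x,V_n)$ to a narrow window around $e_n$.

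The main obstacle is upgrading this window to the exact equality $\rho(x,V_n) = e_n$, which is what distinguishes the present statement from the weaker two-sided result $\frac{d_n}{3} \le \rho(x,V_n) \le 3d_n$ mentioned earlier. Equality on the nose requires that the slack budget $\delta_n$ at level $n$ strictly dominate the total perturbation produced by all later terms, and this is exactly the purpose of the geometric weights $2^{j-n}$: they force the error budgets to close up under the recursion, since each level's budget controls twice the next level's, so the accumulated tail never exhausts the allotment at level $n$. Concretely, I expect the exact value to be forced by choosing, at each stage, the corrected target for $x_n$ so that it compensates for the already-committed tail, and then confirming through the monotone weighted estimate that this correction remains within the admissible range $(0, d_{n,\mathcal{V}})$ supplied by the sublemma. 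The delicate bookkeeping of these nested corrections, rather than any single inequality, is where I expect the real work to lie.
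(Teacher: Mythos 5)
There is a genuine gap, and it sits exactly where you place ``the real work'': the mechanism you propose for upgrading the two-sided window to the exact equality $\rho(x,V_n)=e_n$ does not function. In your forward recursion the only data available when you choose $x_n$ are $x_1,\dots,x_{n-1}$, all of which lie in $V_n$ and hence have no effect whatsoever on $\rho(\,\cdot\,,V_n)$; the perturbation of level $n$ comes entirely from the \emph{future} terms $x_{n+1},x_{n+2},\dots$, which are not yet chosen. So there is no ``already-committed tail'' for the corrected target at stage $n$ to compensate. Once the later terms are chosen, each of them disturbs \emph{all} earlier levels simultaneously, and a single new vector cannot be tuned to restore $n$ interacting equalities at once. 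Your argument, as it stands, proves only the window $|\rho(x,V_n)-\rho(x_n,V_n)|\le\|r_n\|_F$, i.e.\ a Konyagin-type two-sided estimate of the kind in Theorem \ref{main}, not the exact realization claimed here. A second, smaller gap: your intermediate-value sublemma realizes a prescribed distance to \emph{one} subspace $V_n$, whereas what is needed at each finite stage is an element of $V_{n+1}$ with prescribed distances to all of $V_1,\dots,V_n$ simultaneously.

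The paper's proof is organized precisely to avoid this trap. It first shows that the set $F_n=\{v\in V_{n+1}:\rho(v,V_j)=e_j\ \mbox{for}\ j=1,\dots,n\}$ is nonempty --- an exact finite-dimensional realization in the spirit of Lemma \ref{lemma1}, which is where the condition $\sum_{j=n}^\infty 2^{j-n}(\delta_j+e_j)<\min\{d_{n,\mathcal V},e_{n-1}\}$ and your intermediate-value idea (to cope with the non-homogeneity of $\|\cdot\|_F$) actually enter. It then shows the elements of $F_n$ have the structured form $w_n=\sum_{j=1}^n q_{j,n}$ with the uniform bounds $\|q_{j,n}\|<\sum_{l=j}^\infty 2^{l-j}(e_l+\delta_l)$, extracts by a diagonal argument a subsequence with $q_{j,n_k}\to q_j$, and passes to the limit $x=\sum_j q_j$. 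The exact equalities survive because $\rho(\,\cdot\,,V_{j_0})$ is continuous and each $w_{n_k}$ already satisfies them exactly for $k$ large. In short: solve exactly for every finite truncation and take a limit of exact solutions, rather than build one infinite series greedily and hope to correct it afterwards. You would need to restructure your argument along these lines for the equality $\rho(x,V_n)=e_n$ to be attainable.
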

\begin{remark}
 Note that the condition given in the above theorem extends Borodin's condition. In the  case  when $X$ is a Banach space, we have $d_{n, \mathcal{V} }= + \infty$, and the inequality $$ \sum_{j=n}^{\infty} 2^{j-n} e_j < \mbox{min}\{d_{n, \mathcal{V}}, e_{n-1}\}$$
reduces to $$e_{n-1}> \sum_{j=n}^{\infty} 2^{j-n} e_j $$ compared with the condition $e_{n-1} > \displaystyle \sum_{j=n}^{\infty} e_j $ given in  Borodin's theorem.
\end{remark}

The idea of the proof of  the Theorem\ref{Equ} above lies in the following claims:
\renewcommand{\labelenumi}{\alph{enumi})}
\begin{enumerate}
\item $F_n = \{ v \in V_{n+1} : \rho(v,V_j)= e_j \hbox{ for } j=1,...,n\} \neq \emptyset$.

\item  $F_n$ consists of elements of the  form described as $w_n = \sum_{j=1}^n q_{j,n}$,
where $$ q_{j,n} = t_{j,n}v_j- z_{j,n},$$  with $ t_{j,n} \in [0,1],$ $ z_{j,n} \in Z_j$ ( where $Z_j$ a finite subset of $V_j$) for $j=1,...,n$ and $ v_j$ are given by the  equation
$$\sum_{j=n}^{\infty} 2^{j-n} (e_j  + \delta_j)= \rho(v_n, V_n)\geq \|v_n\| - \delta_n.$$
Moreover,$$\| q_{j,n}\| < \sum_{l=j}^{\infty}2^{l-j}(e_l+\delta_l).$$
\end{enumerate}

Using a diagonal argument, select a subsequence $\{n_k\}$ with $\|q_{j,n_k}-q_j\|\to 0$ and $q_j \in V_{j+1}$.  Fix $j_0\in \mathbb{N}$. Then, for $k\geq k_0$ and $n_k \geq j_0$, to construct $x \in X$ with $\rho(x, V_{j_0}) = e_{j_0}$,  set $s_k=\displaystyle\sum_{j=1}^{k} q_j $, show $\{s_k\}$ is Cauchy and converges to $x =\displaystyle\sum_{j=1}^{\infty}q_j$ and then show that  $\|w_{n_k}- x \| \to 0$.
Since $w_{n_k} \in F_{n_k}$, we have:

$$ \rho(w_{n_k} , V_{j_0}) = e_{j_0}\quad \mbox{for} \quad  k \geq k_0$$ hence
$$ \rho(x, V_{j_0} ) = \lim_{k} \rho(w_{n_k},V_{j_o})= e _{j_0}.$$
For the details of the proof  we refer the reader to \cite{Ak-Le2}.
\begin{theorem} [Aksoy, Lewicki \cite{Ak-Le2}]\label{main}
\label{main}
Let $X$ be a infinite-dimensional Fr\'echet space and let $\mathcal{V}=\{V_n\}$ be a nested sequence of subspaces of $ X$ such that $ \overline{V_n} \subseteq V_{n+1}$ for any $ n \in \mathbb{N}.$ 
Let $ e_n$ be a decreasing sequence of positive numbers tending to 0. Assume that 
\begin{equation}
 \label{nondegenerate}
d_{\mathcal{V}} = \inf\{ d_{n,\mathcal{V}}: n \in \mathbb{N} \} > 0.
\end{equation}
Then there exists $ n_o \in \mathbb{N}$ and $ x \in X$ such that for any $ n \geq n_o$
\begin{equation}
\label{no}
\frac{e_n}{3} \leq \rho (x,V_n) \leq 3 e_n.
\end{equation}
\end{theorem}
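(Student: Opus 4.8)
The plan is to deduce Theorem \ref{main} from the exact-deviation result Theorem \ref{Equ} by passing to a sparse subsystem of $\{V_n\}$ and then transporting the resulting estimates back to the full system through the monotonicity of $n\mapsto\rho(x,V_n)$. This is the Fr\'echet analogue of the reduction used in the Banach setting to pass from Borodin's theorem to Konyagin's two-sided bound (Theorems \ref{thm:Borodin}--\ref{Konyagin}): one extracts a geometric subsequence, applies the exact-distance theorem along it, and fills in the remaining indices by monotonicity. The ingredient peculiar to the Fr\'echet case is the standing hypothesis $d_{\mathcal V}>0$, which I would use precisely to verify the deviation part of the hypothesis of Theorem \ref{Equ} for the subsystem.

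Concretely, I would first fix a ratio $\lambda\in(0,\tfrac13)$ and construct, simultaneously, a strictly increasing index sequence $\{m_k\}_{k\ge1}$ and a target sequence $a_k=\kappa\lambda^{k}$ (with $\kappa>0$ to be calibrated) by a greedy rule: having chosen $m_k$, let $m_{k+1}$ be the first index $>m_k$ at which $e$ has dropped by the prescribed factor, so that on the block $m_k\le n<m_{k+1}$ the values $e_n$ stay comparable to $e_{m_k}$. I would then check the hypotheses of Theorem \ref{Equ} for the subsystem $\mathcal V'=\{V_{m_k}\}_k$: nestedness $\overline{V_{m_k}}\subseteq V_{m_{k+1}}$ is inherited from $\mathcal V$; the subsystem deviations obey $d_{k,\mathcal V'}=\sup\{\rho(v,V_{m_k}):v\in V_{m_{k+1}}\}\ge d_{m_k,\mathcal V}\ge d_{\mathcal V}>0$, so for large $k$ the quantity $\min\{d_{k,\mathcal V'},a_{k-1}\}$ equals $a_{k-1}$ (as $a_{k-1}\to0$), while for the finitely many small $k$ the constraint is met by taking $\kappa$ small; and the weighted-sum condition holds because $a_j=\kappa\lambda^{j}$ is geometric with $\lambda<\tfrac13$, giving $\sum_{j\ge k}2^{\,j-k}a_j=\tfrac{a_k}{1-2\lambda}<a_{k-1}$ once the auxiliary $\delta_j>0$ are chosen small enough. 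Theorem \ref{Equ} then yields $x\in X$ with $\rho(x,V_{m_k})=a_k$ for every $k$.

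Next I would transport this back to the full scale. For arbitrary $n$ with $m_k\le n<m_{k+1}$ one has $V_{m_k}\subseteq V_n\subseteq V_{m_{k+1}}$, hence by monotonicity of the deviation
$$
a_{k+1}=\rho(x,V_{m_{k+1}})\le\rho(x,V_n)\le\rho(x,V_{m_k})=a_k .
$$
Combining this with the control $e_{m_{k+1}}\le e_n\le e_{m_k}$ coming from the monotonicity of $\{e_n\}$ and from the greedy choice of the $m_k$, and calibrating $\kappa$ so that $a_k$ tracks $e_{m_k}$, the two chains of inequalities should sandwich the ratio $\rho(x,V_n)/e_n$ into $[\tfrac13,3]$ for all $n\ge n_0:=m_1$, which is the assertion \eqref{no}.

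The step I expect to be the main obstacle is this last one: reconciling the ratio constraint forced by Theorem \ref{Equ} with the desired factor $3$ for an \emph{arbitrary} decreasing null sequence $\{e_n\}$. Theorem \ref{Equ} only delivers exact values along the subsystem, so on each block the bound on $\rho(x,V_n)$ is merely the monotonicity interval $[a_{k+1},a_k]$, whose multiplicative width is $1/\lambda>3$; to squeeze $\rho(x,V_n)/e_n$ into $[\tfrac13,3]$ one needs the admissible windows $[a_k/3,\,3a_{k+1}]$ to cover the whole range of $e$-values, and these windows tile the positive scale without gaps exactly in the borderline case $\lambda=\tfrac13$. Thus the factor $3$ is critical, and the delicate points are (i) handling slowly varying stretches of $e_n$, where a naive choice would force the target ratio back up toward $1$ and violate the hypothesis of Theorem \ref{Equ}, and (ii) handling a single sharp drop of $e_n$ inside a block, where monotonicity alone does not prevent $\rho(x,V_n)$ from collapsing toward $a_{k+1}$ prematurely. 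Resolving these requires choosing the subsequence adaptively — inserting an extra subsystem point whenever $e$ drops by more than the allowed factor in one step, so that every skipped index carries an $e$-value comparable to its block endpoint — which is exactly the refinement of the greedy construction needed to make the sandwich uniform.
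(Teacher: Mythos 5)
Your overall strategy --- extract a sparse subsystem $\{V_{m_k}\}$, apply the exact-deviation Theorem \ref{Equ} along it, and recover the remaining indices from the monotonicity of $n\mapsto\rho(x,V_n)$ --- is the intended one: it is the Fr\'echet analogue of the way the paper derives Theorem \ref{Konyagin} from Theorem \ref{Borodin}, and the paper itself only sketches Theorem \ref{Equ} and defers Theorem \ref{main} entirely to \cite{Ak-Le2}. You also correctly identify where the hypothesis $d_{\mathcal V}>0$ enters (to make $\min\{d_{k,\mathcal V'},a_{k-1}\}=a_{k-1}$ eventually). The problem is that the construction you actually write down cannot deliver the factor $3$, and the difficulty you flag in your last paragraph is not a refinement to be added later but the entire content of the proof. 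With geometric targets $a_k=\kappa\lambda^k$, the condition $\sum_{j\ge k}2^{j-k}(\delta_j+a_j)<a_{k-1}$ forces $\tfrac{1}{1-2\lambda}<\tfrac{1}{\lambda}$, i.e.\ $\lambda<\tfrac13$ \emph{strictly}, with further room needed for the $\delta_j$. Since monotonicity gives only $\rho(x,V_n)\in[a_{k+1},a_k]$ for $n$ in block $k$, the sandwich $e_n/3\le\rho(x,V_n)\le 3e_n$ requires $e_n\in[a_k/3,\,3a_{k+1}]=[a_k/3,\,3\lambda a_k]$; consecutive windows leave the gaps $\left(3\lambda^2a_k,\ a_k/3\right)$ uncovered precisely when $\lambda<\tfrac13$, and a slowly decreasing $e_n$ must pass through these gaps no matter where you place the block boundaries. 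So the geometric version provably yields only a constant strictly larger than $3$. (Contrast the Banach case: there the non-strict condition (\ref{condition}) admits the borderline ratio $\tfrac12$ exactly, which is why the $K2^{-i}$ trick closes in Theorem \ref{Konyagin}.)

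The adaptive repair you gesture at is likewise not routine. Taking $a_k=e_{m_k}$ with $m_{k+1}=\min\{n>m_k:\ e_n\le e_{m_k}/3\}$ makes the upper bound work ($\rho(x,V_n)\le a_k<3e_n$), but the lower bound fails across a large jump: if $e_n\equiv 1$ for $n\le N$ and $e_{N+1}=10^{-6}$, monotonicity gives only $\rho(x,V_N)\ge 10^{-6}$, far below $e_N/3$. Inserting an extra pinned value $c$ at $n=N$ forces $c\ge e_N/3=\tfrac13$ for the lower bound, while the hypothesis of Theorem \ref{Equ} forces $c$ strictly below $a_1/3$ with room for $\delta$ --- contradictory if $a_1=e_1=1$. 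Escaping this requires exploiting the slack in the pinned values themselves (each may be chosen anywhere in $[e_{m_k}/3,\,3e_{m_k}]$, not equal to $e_{m_k}$) and verifying that the index choices, the value choices, the strict summability condition, and both sandwich inequalities can all be satisfied simultaneously along the entire sequence. That simultaneous bookkeeping is the proof in \cite{Ak-Le2}; your proposal names the obstruction accurately but leaves it unresolved.
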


\begin{theorem} [Shapiro's Theorem for Fr$\acute{e}$chet spaces]
Let the assumptions of Theorem \ref{main} be satisfied. Then, there exists $ x \in X$ such that $\rho(x, V_n) \neq O(e_n).$ 
\end{theorem}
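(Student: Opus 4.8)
The plan is to deduce this Shapiro-type statement directly from the two-sided estimate in Theorem \ref{main}, rather than re-running a Baire category argument from scratch. The point to exploit is that Theorem \ref{main} applies to \emph{every} decreasing null sequence, so I am free to feed it an auxiliary sequence that decays strictly more slowly than $\{e_n\}$ and then read off the consequence for $\{e_n\}$ itself.

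Concretely, first I would set $f_n=\sqrt{e_n}$ for every $n$. Since $\{e_n\}$ is a decreasing sequence of positive numbers tending to $0$, the sequence $\{f_n\}$ is again a decreasing sequence of positive numbers tending to $0$, so it satisfies the hypotheses imposed on the target sequence in Theorem \ref{main}; the hypotheses on $X$ and on $\mathcal{V}$ (in particular $\overline{V_n}\subseteq V_{n+1}$ and $d_{\mathcal V}>0$) are exactly those we are assuming and are untouched by the choice of sequence. The key quantitative feature is that $f_n/e_n=1/\sqrt{e_n}\to\infty$ as $n\to\infty$.

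Next I would apply Theorem \ref{main} to the pair $(X,\mathcal V)$ with the sequence $\{f_n\}$ in place of $\{e_n\}$. This yields an integer $n_0$ and an element $x\in X$ with
$$\frac{f_n}{3}\le \rho(x,V_n)\le 3 f_n \quad\text{for all } n\ge n_0.$$
Dividing the left inequality by $e_n$ gives, for $n\ge n_0$,
$$\frac{\rho(x,V_n)}{e_n}\ge \frac{f_n}{3e_n}=\frac{1}{3\sqrt{e_n}}\longrightarrow\infty,$$
so no constant $M>0$ can satisfy $\rho(x,V_n)\le M e_n$ for all $n$; that is, $\rho(x,V_n)\neq O(e_n)$, which is precisely the claim.

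The argument is short once the reduction is spotted, so the only genuine decision --- the ``hard part'' --- is the choice of the auxiliary sequence: it must simultaneously remain decreasing and null so that Theorem \ref{main} applies, and dominate $\{e_n\}$ strongly enough that $f_n/e_n$ is unbounded. The square root does both, but any $\{f_n\}$ with $f_n\downarrow 0$ and $\limsup_n f_n/e_n=\infty$ would serve equally well; one should merely check that the finitely many early terms where possibly $f_n<e_n$ (when $e_n>1$) cause no trouble, since they are absorbed into $n_0$ and the divergence of the ratio is an asymptotic statement.
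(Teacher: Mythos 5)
Your proposal is correct and follows exactly the paper's own route: apply Theorem \ref{main} to the auxiliary sequence $\{\sqrt{e_n}\}$ and use the lower bound $\rho(x,V_n)\ge \sqrt{e_n}/3$ together with $e_n\to 0$ to conclude that the ratio $\rho(x,V_n)/e_n$ is unbounded. Your added remark about absorbing the finitely many initial terms into $n_0$ is a harmless elaboration of the same argument.
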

\label{Sha}
\begin{proof}
By Theorem \ref{main} applied to the sequence $ \{ \sqrt{e_n}\}$ there exists $ x\in X$ and $ n_o \in \mathbb{N}$ such that
$$
\frac{\sqrt{e_n}}{3} \leq \rho(x, V_n) \leq 3 \sqrt{e_n}
$$ 
for $n \geq n_o.$
Since $ e_n \rightarrow 0,$ it is obvious that $ \rho(x, V_n) \neq O(e_n).$
\end{proof}

\begin{theorem}[Tyuremskikh's Theorem for Fr$\acute{e}$chet spaces]
Let the assumptions of Theorem \ref {main} be satisfied. Then, there exists $ x \in X$ and $ n_o \in \mathbb{N}$ such that $\rho(x, V_n) \geq e_n$ for $ n\geq n_o.$ 
\end{theorem}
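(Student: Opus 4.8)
The plan is to deduce this statement from Theorem~\ref{main} by a single rescaling of the target sequence, in exactly the same spirit as the proof of Shapiro's theorem for Fr\'echet spaces given just above. The obstacle to a naive application is that Theorem~\ref{main} only delivers the two-sided estimate $\frac{e_n}{3}\le\rho(x,V_n)\le 3e_n$, whose lower half carries the factor $\frac{1}{3}$ rather than the bare $e_n$ demanded by Tyuremskikh's conclusion. So the whole task reduces to arranging that this factor be absorbed \emph{before} Theorem~\ref{main} is invoked.

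First I would check that the hypotheses of Theorem~\ref{main} are stable under the substitution $e_n\mapsto 3e_n$. Since $\{e_n\}$ is a decreasing null sequence of strictly positive numbers, so is $\{3e_n\}$; and the nondegeneracy assumption $d_{\mathcal V}=\inf_n d_{n,\mathcal V}>0$ is a condition on the subspace system $\mathcal V$ alone, so it is untouched by any change of the $e_n$. Hence Theorem~\ref{main} applies verbatim to the sequence $\{3e_n\}$.

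Next, applying Theorem~\ref{main} to $\{3e_n\}$ produces an integer $n_o\in\mathbb N$ and an element $x\in X$ such that
$$
\frac{3e_n}{3}\le\rho(x,V_n)\le 3\cdot 3e_n,\qquad n\ge n_o,
$$
that is, $e_n\le\rho(x,V_n)\le 9e_n$ for all $n\ge n_o$. The lower inequality is precisely the asserted $\rho(x,V_n)\ge e_n$. The upper inequality comes for free and is a bonus: combined with $e_n\to0$ it forces $\rho(x,V_n)\to0$ as well, thereby recovering the \emph{full} Tyuremskikh conclusion (errors that decay to zero yet no faster than the prescribed $e_n$), not merely the stated lower bound.

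I expect no genuine difficulty here, since all the work has already been carried out inside Theorem~\ref{main}. The single conceptual point worth flagging is why I rescale the \emph{input} sequence instead of the \emph{output} vector: in a Fr\'echet space the $F$-norm is not homogeneous, so in general $\rho(\lambda x,V_n)\ne|\lambda|\,\rho(x,V_n)$, and one cannot simply multiply the $x$ furnished by Theorem~\ref{main} by $3$ to inflate its distances. Feeding the amplified sequence $\{3e_n\}$ into the theorem circumvents this failure of homogeneity and yields the claim at once.
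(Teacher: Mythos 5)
Your proof is correct, and it takes a slightly different (and in fact cleaner) route than the paper's. The paper applies Theorem~\ref{main} to the auxiliary sequence $\{3\sqrt{e_n}\}$, obtaining $\sqrt{e_n}\le\rho(x,V_n)\le 9\sqrt{e_n}$ for $n\ge n_o$, and then concludes via the extra observation that $\sqrt{e_n}\ge e_n$ once $e_n\le 1$ (which holds for large $n$ since $e_n\to 0$). You instead feed in $\{3e_n\}$ directly, which satisfies the hypotheses of Theorem~\ref{main} just as well, and the factor $3$ cancels against the $\tfrac{1}{3}$ in the lower estimate to give $e_n\le\rho(x,V_n)\le 9e_n$ at once. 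Your variant avoids the (harmless but real) dependence on $e_n\le 1$ and delivers a strictly sharper upper bound, $9e_n$ rather than $9\sqrt{e_n}$, so the element you produce has best-approximation errors genuinely \emph{equivalent} to $e_n$ on $[n_o,\infty)$ rather than merely bounded below by them. Your closing remark about why one must rescale the input sequence rather than the output vector --- the failure of homogeneity of the $F$-norm --- is exactly the right point to flag, and is the same reason the paper's own argument is phrased as a substitution into Theorem~\ref{main}.
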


\begin{proof}
By Theorem \ref{main} applied to the sequence $ \{ 3\sqrt{e_n}\}$ there exists $ x\in X$ and $ n_o \in \mathbb{N}$ such that
$$
\sqrt{e_n} \leq \rho(x, V_n) \leq 9 \sqrt{e_n}
$$ 
for $ n\geq n_o.$
Since $ e_n \rightarrow 0,$ it is obvious that $ \rho(x, V_n) \geq \sqrt{e_n} \geq e_n$ for $ n\geq n_o.$ 
\end{proof}

\section { Bernstein Pairs}
Suppose $X$ and $Y$ are infinite dimensional Banach spaces. Let $ \mathcal{L}(X,Y)$ denote the normed space of all bounded linear  maps from $X$ to $Y$. In this section we investigate the existence of an operator $T\in \mathcal{L}(X,Y)$  whose sequence of approximation numbers $\{a_n(T)\}$ behaves like the prescribed sequence $\{d_n\}$ given in the Bernstein's Lethargy Theorem.
 
\begin{definition}
Let $X$ and $Y$ be Banach spaces. For every operator  $T \in  \mathcal{L}(X,Y)$ the n-th approximation number of $T$ is defined as:
$$ a_n(T) = \inf\{||T - S|| : \quad S \in  \mathcal{L}(X,Y)  \quad \mbox{rank}\, S < n\}.$$ 
\end{definition}

Clearly,
$$ a_n(T) = \rho (T, \mathcal{F}_n)$$ where $ \mathcal{F}_n$ is the space of all bounded linear maps from $X$ into $Y$ with rank at most $n$, and

$$ ||T|| =a_1(T) \geq a_2(T) \geq  \dots \geq 0.$$
The connection with Bernstein's Lethargy theorem  and the approximation numbers is based on the  important relation between analytical entities for  bounded linear maps such as eigenvalues and the geometrical quantities typified by approximation numbers. If we focus on compact liner maps, then there are known inequalities between the approximation numbers and the non-zero eigenvalues of such maps. For example, the well known inequality of Weyl \cite{We}, which states if the approximation numbers  form a sequence in $\ell^{p}$ for $0<p< \infty$, then so do the eigenvalues. Even though Weyl's original result is for Hilbert spaces, there is a remarkable theorem of K\"{o}nig \cite{Ko}, for the absolute value of the eigenvalues of a compact map in terms of approximation numbers. More precisely, Let $X$ be a complex Banach space and let $T$ be a compact operator on $X$. Assume that the nonzero eigenvalues of $T$ are ordered in non-increasing modulus and counted according to their (finite) multiplicity, and denote them by $\lambda_{n}(T)$.  K\"{o}nig proves that 
$$|\lambda_{n} (T )| = \lim_{m\to \infty} (a_n(T^{m} )^{ \frac{1}{m}} .$$  It is known that the approximation numbers are the largest s-numbers;  for the axiomatic theory of s-numbers we refer the reader to  \cite{Pi2}. 

  If $X$ and $Y$ are Hilbert spaces, then the sequence  of s-numbers $( s_n(T))$ or, in particular, the sequence of approximation numbers $(a_n(T))$ are the singular numbers of $T$.  Moreover if  $T$ is a compact operator, then $a_n(T)= \lambda _n( T^*T)^{1/2}$ where
$$ \lambda_1( T^*T)\geq\lambda_2( T^*T)\geq \dots \geq 0$$ are the eigenvalues of $ T^*T$ ordered as above.  We refer the reader to \cite{Carl,Pi} for general information about approximation and other s-numbers. We say  $T$ is \textit{approximable} if $\displaystyle \lim_{n \to \infty} a_n(T)=0$. Any approximable operator is compact, but the converse is not true due to the existence of Banach spaces without an approximation property \cite{Enflo}.

\begin{definition}
Two Banach spaces X and Y are said to form a Bernstein pair (BP) if for any positive monotonic null sequence $(d_n)$ there is an operator $T\in \mathcal{L}(X,Y)$
and a constant $M$ depending only on $T$ and $(d_n)$ such that 

$$d_n \leq a_n(T) \leq M d_n  \quad \quad \mbox{ for all}\quad n.$$  In this case we say that $(a_n(T))$ is equivalent to $(d_n)$ and  we write $(X, Y)$  to denote Bernstein pair and in the case $M=1$, then $(X,Y)$ is called an exact Bernstein pair.
\end{definition}
Note that by $a_n(T)$ we mean the n-th approximation number of $T$ defined above.
Note that if  $X=Y=H$ where $H$ is a  Hilbert space and if  $\{d_n\}$ is a sequence decreasing to $0$, then there exists \textit{a compact operator} $T$ on $H$ such that $$a_n(T)= d_n$$ (see \cite{Pi2}). Therefore $(H, H)$ form an exact Bernstein pair.

 All of the above mentioned desirable properties of $a_n(T)$ for operators between Banach spaces prompts the following questions:\\[.05in]
\noindent\textbf{Questions:}
Suppose  $\{d_n \}_{n\ge1}$ is a non-increasing null sequence (i.e., $d_n \searrow 0 ^+$) of positive numbers. Does there exist $T \in  \mathcal{L}(X , Y )$ such that the sequence $(a_n(T))$ behaves like the sequence $\{d_n\}$?\\[.05in]
In other words, suppose $d_n \searrow 0 ^+$, 
\begin{enumerate}
\item Does there exist $T \in  \mathcal{L} (X , Y )$ such that   $a_n(T)  \geq d_n$ for any $n$?
\item Does there exist $T \in   \mathcal{L}(X , Y )$ such that  $a_n(T)  = d_n$  for any $n$?
\item Does there exist $T \in   \mathcal{L}(X , Y )$  and a constant $M$ such that $$\displaystyle \frac{d_n}{M} \leq a_n(T) \leq M d_n$$ for any $n$?
\end{enumerate}

First results of this kind appeared  in \cite{HMR} in the context of Banach spaces with Schauder basis. 

 An operator acting on a Banach space $X$ is called an $H$-operator (see \cite{Mar}) if its spectrum is real and its resolvent satisfies
$$ ||(T-\lambda I)^{-1}|| \leq C |\mbox{Im}\lambda |^{-1}\quad \mbox{where}\quad  \mbox{Im} \lambda \neq 0 \quad$$ Here $C$ is independent of the points of the resolvent. An operator on a Hilbert space is an $H$-operator with constant $C=1$ if and only if it is a self-adjoint operator. Therefore the concept of an $H$-operator is the generalization in a Banach space  the concept of a self-adjoint operator.
In \cite{Mar} some examples and a number of properties of $H$-operators are given. 
\begin{definition}
 We define $n$-th Kolmogoroff diameter of T, $d_n(T) $, by $d_n(T)= d_n ( T(U_{X})) $ where for a bounded subset $A$ of $X$,  the $n$-th Kolmogoroff diameter of $A$
 $$ d_n(A) =\inf_{L}[ \inf \{ \epsilon > 0: \,\, A \subset \epsilon U_{X} +L \}],$$ where the infimum is taken over all at most $n$-dimensional subspaces $L$ of $Y$. 
\end{definition}
 It is clear that $d_n(T) $ and $a_n(T)$ are monotone decreasing sequences and that  
 
 $$ \lim_{n} a_n(T) =0 \quad \mbox{if and only if} \quad  T\in \mathcal{F} (X, Y) $$ 
 and
    $$ \lim_{n} d_n(T) =0 \quad \mbox{if and only if} \quad T\in \mathcal{K} (X, Y) $$ 
    where $\mathcal{K} (X, Y)$  and $ \mathcal{F} (X, Y) $is the collection of all  compact operators and finite rank maps from $X$ to $Y$.  For more on Kolmogorov diameters  and their relations to approximation numbers consult \cite{Pi2}.
\begin{theorem}[Marcus \cite{Mar}]  \label{Marcus}
If $T$ is a compact $H$-operator on a Banach space $X$, then $$ d_n(T) \leq a_n(t) \leq 2 \sqrt{2}\,\,C |\lambda_n(T) | \leq 8C(C+1) d_n(T),$$ where $C$ is the constant given in the definition of an $H$-operator and $d_n(T) $ is the $n$-th Kolmogoroff diameter of T.
\end{theorem}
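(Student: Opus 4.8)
The plan is to establish the chain of four inequalities separately, since each compares two of the quantities $d_n(T)$, $a_n(T)$, $|\lambda_n(T)|$ that are tied together by the $H$-operator resolvent bound. The leftmost inequality $d_n(T)\le a_n(T)$ I would argue directly from the definitions: given any $S\in\mathcal L(X,Y)$ with $\mathrm{rank}\,S<n$, put $L=S(X)$, so $\dim L\le n-1\le n$. For every $x\in U_X$ we have $Tx-Sx\in\|T-S\|\,U_Y$ and $Sx\in L$, hence $T(U_X)\subseteq\|T-S\|\,U_Y+L$; by the definition of the $n$-th Kolmogoroff diameter this gives $d_n(T)\le\|T-S\|$, and taking the infimum over admissible $S$ yields $d_n(T)\le a_n(T)$. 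This step is routine and only reflects that approximation numbers are built from rank constraints while Kolmogoroff diameters are built from subspace constraints.

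For the central inequality $a_n(T)\le 2\sqrt2\,C\,|\lambda_n(T)|$ the plan is spectral. Since $T$ is compact, its nonzero spectrum is a discrete set of eigenvalues $\{\lambda_k(T)\}$, which are real because $T$ is an $H$-operator; moreover the first-order bound $\|(T-\lambda I)^{-1}\|\le C|\mathrm{Im}\,\lambda|^{-1}$ forbids Jordan blocks, so each eigenvalue is semisimple and $T$ genuinely behaves like a self-adjoint operator. I would take the Riesz projection $P=\frac{1}{2\pi i}\oint_{\Gamma}(\lambda I-T)^{-1}\,d\lambda$, where $\Gamma$ encloses exactly the top eigenvalues $\lambda_1,\ldots,\lambda_{n-1}$; then $\mathrm{rank}\,(TP)\le n-1<n$, so $a_n(T)\le\|T-TP\|=\|T(I-P)\|$. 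Because $P$ commutes with the resolvent, $B:=T(I-P)$ is again an $H$-operator with the same constant $C$ whose spectrum lies in $[-|\lambda_n(T)|,\,|\lambda_n(T)|]$. The task thus reduces to the clean statement that a compact $H$-operator $B$ with spectrum in $[-a,a]$ satisfies $\|B\|\le 2\sqrt2\,C\,a$ with $a=|\lambda_n(T)|$, which I would obtain from $B=\frac{1}{2\pi i}\oint_{\Gamma'}\lambda(\lambda I-B)^{-1}\,d\lambda$ and the estimate $\|B\|\le\frac{1}{2\pi}\oint_{\Gamma'}|\lambda|\,\|(\lambda I-B)^{-1}\|\,|d\lambda|$.

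The main obstacle is precisely this last estimate: the bound $C/|\mathrm{Im}\,\lambda|$ degenerates as the contour approaches the real axis, where the spectrum lives, so a naive circular or rectangular $\Gamma'$ produces a divergent integral of the type $\int d\theta/|\sin\theta|$. The decisive point is that $B$ is \emph{compact}, so on the real axis its spectrum is the \emph{discrete} set $\{\lambda_k\}_{k\ge n}\cup\{0\}$; this lets me route $\Gamma'$ across the real axis only inside spectral gaps, keep $|\mathrm{Im}\,\lambda|$ bounded below on the horizontal portions, and then optimize the height and aspect ratio of the contour against the weight $|\lambda|$, which satisfies $|\lambda|\le\sqrt2\,a$ on a contour of diameter comparable to $a$. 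Balancing these contributions is what produces the geometric factor $2\sqrt2$ and hence $\|B\|\le 2\sqrt2\,C\,a$. Making this bound \emph{uniform in} $n$ — independent of the precise location and size of the gaps — is the technical heart of the argument, and is where I expect to spend the most care.

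For the rightmost inequality $2\sqrt2\,C\,|\lambda_n(T)|\le 8C(C+1)\,d_n(T)$, equivalently $|\lambda_n(T)|\le 2\sqrt2\,(C+1)\,d_n(T)$, the plan is to bound the Kolmogoroff diameter from below by the eigenvalue. Let $E$ be the span of eigenvectors for the top eigenvalues, an invariant subspace on which $T$ acts with $\|Tu\|\ge|\lambda_n(T)|\,\|u\|/\kappa$ in a suitable normalization, where the distortion $\kappa$ is controlled by the norms of the Riesz eigenprojections — and these are themselves estimated in terms of $C$ by the same resolvent bound used above. If $T(U_X)\subseteq\epsilon U_Y+L$ with $\dim L$ too small to separate $E$, a dimension count produces a unit vector in $E$ whose image is forced simultaneously to have norm $\gtrsim|\lambda_n(T)|$ and to lie within $\epsilon$ of $L$; comparing the two gives $\epsilon\ge|\lambda_n(T)|/(2\sqrt2\,(C+1))$, which is the claimed lower bound on $d_n(T)$. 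Assembling the three inequalities yields the full chain, and I would close by noting that for $X$ a Hilbert space and $T$ self-adjoint one has $C=1$, so all four quantities collapse to the classical identity $a_n(T)=|\lambda_n(T)|$.
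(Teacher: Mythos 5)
First, a point of reference: the paper does not prove this statement --- it is quoted verbatim from Marcus \cite{Mar} and used as a black box to derive the Hutton--Morrell--Retherford theorem --- so your proposal can only be measured against the cited source and on its own merits. Your first inequality $d_n(T)\le a_n(T)$ is correct and routine, exactly as you say. The problem is that the two inequalities that actually constitute Marcus's theorem are the ones your plan defers, and the one concrete method you commit to for the middle inequality cannot work as described. If $B=T(I-P)$ and $\Gamma'$ is any rectifiable contour enclosing $\sigma(B)$, then at every point where $\Gamma'$ crosses the real axis one has $|\mathrm{Im}\,\lambda|\to 0$ along $\Gamma'$, and the \emph{only} estimate the $H$-operator definition supplies is $\|(\lambda I-B)^{-1}\|\le C|\mathrm{Im}\,\lambda|^{-1}$; hence $\oint_{\Gamma'}|\lambda|\,\|(\lambda I-B)^{-1}\|\,|d\lambda|$ diverges logarithmically at every crossing, no matter whether the crossing sits in a spectral gap. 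Keeping $|\mathrm{Im}\,\lambda|$ bounded below ``on the horizontal portions'' does not address this, because the divergence lives precisely on the portions where the contour descends to the axis. To rescue the ML-estimate you would need a bound of the form $\|(\lambda I-B)^{-1}\|\le C'/\mathrm{dist}(\lambda,\sigma(B))$ at \emph{real} $\lambda$ in a gap, and this does not follow from the $H$-condition by the obvious perturbation argument: writing $\lambda-B=(\lambda+i\epsilon-B)-i\epsilon$ gives a perturbation of relative size $\epsilon\cdot C/\epsilon=C\ge 1$, so the Neumann series never converges. The same divergence already obstructs your implicit claims that $\|P\|$ is controlled by $C$ and that $B$ is ``again an $H$-operator with the same constant $C$,'' and it resurfaces in the third inequality, where the ``distortion $\kappa$'' of the invariant subspace is again the norm of a Riesz projection that you have not bounded. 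Marcus's actual derivation of the constants $2\sqrt2\,C$ and $2\sqrt2\,(C+1)$ rests on a genuinely different, regularized treatment of these spectral projections, not on a direct contour ML-estimate.

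Two smaller points. The first-order resolvent bound does not by itself ``forbid Jordan blocks'' in the naive sense you invoke; what it controls is the order of the pole of the resolvent at real eigenvalues, and even that requires an argument. And your closing remark is wrong as stated: when $C=1$ the chain reads $d_n\le a_n\le 2\sqrt2\,|\lambda_n|\le 16\,d_n$, which does not collapse to $a_n(T)=|\lambda_n(T)|$; that identity for compact self-adjoint operators is a separate (classical, minimax) fact, not a consequence of these inequalities.
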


Recall that  a biorthogonal system $(x_n, f_n )$  in $X$ is sequence  $\{x_n\} \subset X$ and   $\{f_n\} \subset X^*$ such that $f_i(x_j) = \delta_{ij}$.  A Schauder basis $X$ is a biorthogonal system $(x_n,f_n)$ such that for each $x\in X$, $x=\displaystyle \sum_{i=1}^{\infty} f_i(x) x_i$. 

Using  the above estimate of Marcus, Hutton, Morell and Retherford  proved the following :  
\begin{theorem}[ Hutton, Morrell and Retherford  \cite{HMR}] 
Suppose $\{d_n \}_{n\ge1}$ is a non-increasing null sequence of positive numbers and $X$ is a Banach space with Schauder basis. Then there exists $T\in B(X)$ such that $$d_m \leq a_m(T) \leq K  . d_m$$ where the constant $K$depends on $X$.
\end{theorem}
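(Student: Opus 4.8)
The plan is to realize the prescribed decay rate $\{d_n\}$ as the eigenvalue sequence of an explicit diagonal operator built from the Schauder basis, to verify that this operator is a compact $H$-operator, and then to read off the two-sided estimate directly from Marcus's Theorem~\ref{Marcus}. Let $(x_n,f_n)$ be the Schauder basis of $X$, so that the partial-sum projections $P_N x=\sum_{n=1}^N f_n(x)x_n$ are uniformly bounded by the basis constant $K_b$. I would set
$$Tx=\sum_{n=1}^\infty d_n f_n(x)x_n,$$
so that $Tx_n=d_n x_n$. Thus each $x_n$ is an eigenvector; since $\{d_n\}$ is already non-increasing, the nonzero eigenvalues counted with multiplicity are exactly $\lambda_n(T)=d_n$, and the spectrum $\{d_n\}\cup\{0\}$ is real.

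First I would confirm that $T$ is a well-defined, bounded, approximable (hence compact) operator. For any multiplier sequence $(m_n)$ of bounded variation, Abel summation against the projections $P_n$ yields $\|\sum_n m_n f_n(\cdot)x_n\|\le K_b(\|m\|_\infty+\mathrm{Var}(m))$. Applying this to the tail multipliers gives $\|T-\sum_{n\le N} d_n f_n(\cdot)x_n\|\le 3K_b d_{N+1}\to 0$, so $T$ is a norm limit of finite-rank operators and therefore compact.

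The heart of the argument is to show that $T$ is an $H$-operator. For $\lambda=a+ib$ with $b=\operatorname{Im}\lambda\ne 0$, the resolvent is again a multiplier operator, $(T-\lambda I)^{-1}=\sum_n(d_n-\lambda)^{-1}f_n(\cdot)x_n$, with multiplier $m_n=(d_n-\lambda)^{-1}$. Here $\|m\|_\infty\le 1/|b|$, while the total variation is controlled by comparison with an integral: writing $g(t)=(t-\lambda)^{-1}$ and using that the $d_n$ form a monotone sequence in $[0,d_1]$,
$$\mathrm{Var}(m)=\sum_n|g(d_{n+1})-g(d_n)|\le\int_0^{d_1}\frac{dt}{(t-a)^2+b^2}\le\frac{\pi}{|b|}.$$
Combining with the Abel-summation bound yields $\|(T-\lambda I)^{-1}\|\le K_b(1+\pi)/|b|$, so $T$ is a compact $H$-operator with constant $C=K_b(1+\pi)$. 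I expect this resolvent estimate to be the main obstacle: the apparent danger is a large gap in $\{d_n\}$ straddling $a$, but the total-variation/integral bound absorbs every gap automatically, which is precisely why it is the clean route.

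With $T$ identified as a compact $H$-operator, Marcus's Theorem~\ref{Marcus} gives $d_n(T)\le a_n(T)\le 2\sqrt2\,C\,d_n\le 8C(C+1)d_n(T)$. The upper inequality is already $a_n(T)\le 2\sqrt2\,C\,d_n$, while chaining the remaining inequalities and using $d_n(T)\le a_n(T)$ produces $a_n(T)\ge d_n/(2\sqrt2(C+1))$. Finally, rescaling $\tilde T=2\sqrt2(C+1)T$ and invoking the homogeneity $a_n(\alpha T)=|\alpha|a_n(T)$ delivers $d_m\le a_m(\tilde T)\le 8C(C+1)d_m$, which is the assertion with $K=8C(C+1)$ depending only on $X$ through its basis constant.
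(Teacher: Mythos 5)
Your proof follows exactly the paper's route: build the diagonal operator $T=\sum_n d_n\, f_n\otimes x_n$ from the Schauder basis, observe that it is a compact $H$-operator whose nonzero eigenvalues are precisely the $d_n$, and read the two-sided bound off Marcus's Theorem~\ref{Marcus}, rescaling at the end. The only difference is that you supply the Abel-summation bound for multiplier operators and the total-variation estimate $\mathrm{Var}(m)\le\pi/|\operatorname{Im}\lambda|$ for the resolvent, details the paper defers to \cite{HMR}; these are correct and yield the constant $K=8C(C+1)$ with $C$ depending only on the basis constant of $X$.
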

\textbf{Proof}
Suppose $(x_n)$ is a Schauder basis for $X$.
If $\{\lambda_n \}\searrow  0$, then the operator  $$ T =\displaystyle\sum_{n=1}^{\infty}  \lambda_n f_n \otimes x_n$$ is a compact $H$-operator with the sequence $\{\lambda_n\}$ as  eigenvalues. Here  $(x_n, f_n)$ is a biorthogonal system in $X$, then the result follows from Theorem \ref {Marcus}. For further details consult \cite{HMR}.\\

These results are sharpened in \cite{Ak-Le} as follows:

\begin{theorem}[Aksoy,  Lewicki  \cite{Ak-Le}] 
Suppose  $\{d_n \}_{n\ge1}$ is a non-increasing null sequence of positive numbers. In each of the following cases there exists $T\in B(X,Y)$ such that 
$$a_m(T) = d_m \quad \mbox{for each} \quad m$$
\renewcommand{\labelenumi}{\alph{enumi})}
\begin{enumerate}
\item $X$ and $Y$ has 1-unconditional basis.\\ (For example $X=Y=\ell_p$ for $1\leq p < \infty$ and $X=Y=c_0$).
\item $X\in \{c_0, \ell_{\infty}\}$ and $Y$ has 1-symmetric basis.
\item $Y=\ell_1$, $X$ has 1-symmetric basis.
\end{enumerate}
\end{theorem}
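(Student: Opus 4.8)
The plan is, in each case, to exhibit one explicit operator and to compute its approximation numbers by hand, using the identity $a_m(T)=\rho(T,\mathcal F_m)$ recorded above. Let $(x_n,f_n)$ denote the normalized basis of $X$ with its coordinate functionals and $(y_n)$ the normalized basis of $Y$; the natural candidate is the diagonal operator
$$
Tx=\sum_{n=1}^{\infty} d_n\,f_n(x)\,y_n ,
$$
so that $Tx_n=d_n y_n$. Everything then reduces to proving the single identity $a_m(T)=d_m$, which I would attack from two sides: an upper bound from an explicit finite-rank approximant, and a lower bound from a kernel/dimension count. Since $d_m\to 0$, the identity $a_m(T)=d_m$ also certifies that $T$ is approximable, hence compact, so the three hypotheses on $X$ and $Y$ must do two jobs at once: make the two one-sided bounds meet exactly at $d_m$, and keep $T$ bounded.

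The lower bound is the uniform part of the argument. Restrict $T$ to $M_m=\operatorname{span}\{x_1,\dots,x_m\}$. If $\operatorname{rank}S<m$ then $S$ annihilates some unit vector $x=\sum_{k=1}^{m}\xi_k x_k\in M_m$, whence $\|T-S\|\ge\|Tx\|=\|\sum_{k=1}^{m}d_k\xi_k y_k\|$. As $d_k\ge d_m$ for $k\le m$, $1$-unconditionality of the range basis gives $\|\sum_{k=1}^{m}d_k\xi_k y_k\|\ge d_m\|\sum_{k=1}^{m}\xi_k y_k\|$, and it remains to see that this last norm is at least $1$. In the model case $X=Y$ (case (a)) it equals $\|x\|_X=1$ because the two bases coincide; in case (b) a norm-one vector of $c_0$ or $\ell_\infty$ has a coordinate of modulus one, so even one surviving coordinate forces the norm up to $d_m$; in case (c) additivity of the $\ell_1$ norm together with $1$-symmetry of the domain pins the value down. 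Taking the infimum over $S$ gives $a_m(T)\ge d_m$ in each case.

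The upper bound is where the asymmetry between $X$ and $Y$ becomes delicate. In the model case $X=Y$ I would simply truncate: $S_{m-1}=\sum_{k<m}d_k f_k\otimes y_k$ has rank $<m$ and $T-S_{m-1}$ is diagonal with largest coefficient $d_m$, so $1$-unconditionality yields $\|T-S_{m-1}\|\le d_m\|P_{\ge m}\|=d_m$, with $P_{\ge m}$ a norm-one tail projection; together with the lower bound this already settles case (a). For cases (b) and (c) the crude truncation is generally too expensive, because summing the whole tail in $Y$ (respectively evaluating on the full $c_0/\ell_\infty$ ball) need not collapse to $d_m$. Here I would exploit the geometry of the extremal spaces: when the range is $\ell_1$, or the domain is $c_0$ or $\ell_\infty$, one can choose the finite-rank approximant so that the additive $\ell_1$ range norm, or the sup-type domain norm, drives the residual down to exactly $d_m$, and $1$-symmetry of the remaining basis is what makes this computation insensitive to the ordering of coordinates.

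The step I expect to be the \textbf{main obstacle} is obtaining \emph{exact} equality $a_m(T)=d_m$, rather than the weaker equivalence $d_m\le a_m(T)\le M d_m$ that the same two estimates yield for arbitrary unconditional bases. Pushing the constant down to $1$ forces the basis constants to be exactly $1$ and demands a precise matching of the domain and range norms, which is exactly why the hypotheses are stated isometrically and why only these three configurations appear. Inseparable from this is the question of \emph{boundedness}: the diagonal operator is automatically bounded when $X=Y$ (indeed $\|T\|=d_1$), but between genuinely different spaces boundedness is a real constraint on $T$, and it is precisely the extremal role of $c_0,\ell_\infty$ as domains and of $\ell_1$ as range that I would use to secure boundedness and the sharp upper estimate simultaneously. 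I therefore expect the heart of the proof to lie in the case-(b) and case-(c) constructions of the finite-rank approximants, with case (a) serving as the transparent prototype.
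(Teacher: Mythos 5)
There is a genuine gap here, and it sits exactly where you predicted it would. The paper itself does not reproduce a proof of this theorem (it only cites \cite{Ak-Le}; the nearest in-paper argument is the sketch of the Hutton--Morrell--Retherford result via Marcus's $H$-operator inequality, which only yields the two-sided equivalence $d_m\le a_m(T)\le K d_m$), so your plan of proving exact equality by direct two-sided estimates on a diagonal operator is the right genre of argument, and your case (a) computation is complete and correct in the model situation $X=Y$ with a common $1$-unconditional basis: the truncation $S_{m-1}$ gives $\|T-S_{m-1}\|\le d_m$ by the multiplier property of $1$-unconditional bases, and the kernel argument on $\mathrm{span}\{x_1,\dots,x_m\}$ gives the matching lower bound.

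The problem is that in cases (b) and (c) -- and even in case (a) read literally with $X\ne Y$ -- the proposed operator $T=\sum_n d_n\,f_n\otimes y_n$ need not be bounded, so the construction does not get off the ground. Take $X=c_0$ (or any space with $1$-symmetric basis) and $Y=\ell_1$, which falls under both (b) and (c): for $x=\sum_{n\le N}e_n\in B_{c_0}$ one has $\|Tx\|_1=\sum_{n\le N}d_n$, so $\|T\|\ge\sum_n d_n$, which diverges for a typical non-increasing null sequence such as $d_n=1/\log(n+1)$. Your closing paragraph asserts that the extremal role of $c_0,\ell_\infty$ as domains and of $\ell_1$ as range is what will ``secure boundedness,'' but the opposite is true: these are precisely the pairs for which a diagonal operator is bounded only when the diagonal is summable. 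Consequently cases (b) and (c) require a genuinely different operator (the cited paper and the related literature use non-diagonal constructions, e.g.\ suitably scaled blocks of finite-dimensional formal identities, whose approximation numbers must then be computed by a separate min--max argument), and your proposal supplies no such construction -- it only expresses the intention to ``exploit the geometry.'' A second, smaller defect: in case (a) with $X\ne Y$ your lower bound uses $\bigl\|\sum_{k\le m}\xi_k y_k\bigr\|_Y\ge\bigl\|\sum_{k\le m}\xi_k x_k\bigr\|_X=1$, which is false in general (for $X=\ell_1$, $Y=c_0$ and $\xi_k=1/m$ the left side is $1/m$), and indeed the diagonal operator from $\ell_1$ to $c_0$ does not have $a_m=d_m$. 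So the argument is sound only for the $X=Y$ prototype; the three listed configurations are exactly the ones your method does not reach.
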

In \cite{Ak-Le}, among other things, it was proved that if we assume $(X,Y)$ is a Bernstein pair with respect to $\{a_n\}$ and suppose that a Banach space $W$ contains an isomorphic and  complementary copy of $X$, and a Banach space $V$ contains an isomorphic copy of $Y$, then $(W, V)$ is a Bernstein Pair with respect to $\{a_n\}$. This implies that  there are some natural pairs of Banach spaces that form a Bernstein pair as shown in the following examples.

\begin{example}
For $1< p < \infty$ and $1 \leq q < \infty$, the couple $(L_p[0,1], L_q[0,1])$ form a Bernstein pair.
\end{example}
The above example follows from the fact that $(\ell_2, \ell_2)$ is a Bernstein pair with respect to $\{a_n\}, $ \cite{Pi} ,and the fact that for every $p$, $1\leq p < \infty$, $L_p[0,1]$ contains a subspace isomorphic to $\ell_2$ and complemented in $L_p[0,1]$ for $p> 1$ (see \cite{Wo}, page 85).  If one assumes that the Banach spaces  $X$ and $Y$ have certain properties then one can generate Bernstein pairs as illustrated in the following  ``conditional" example.

\begin{example}
\renewcommand{\labelenumi}{\alph{enumi})}
\begin{enumerate}
\item If $(\ell_{\infty},\ell_{\infty}) $ is BP, then $(X,Y)$ is BP provided that both $X$ and $Y$ contains an isomorphic copy of $\ell_{\infty}$ (Phillips Theorem).
\item If $(c_0,c_0)$  is a BP, then $(X,Y)$ is BP provided that both $X$ and $Y$ each contain an isomorphic copy of $c_0$ and $X$ is separable  (Sobczyk's Theorem).
\item if $(\ell_1, \ell_1)$ is a BP, then $(X,Y)$ is BP provided that both $X$ and $Y$ each contain an isomorphic copy of $\ell_1$ and $X$ is a non reflexive subspace of $L_1[0,1]$ (Pelczynski-Kade\v c Theorem). 
\end{enumerate}
\end{example}
 For the statements of Phillips, Sobczyk and Pelczynski-Kade\v c theorems we refer the reader to \cite {Di}. The most recent result on the rate of decay of approximation numbers can be found in \cite{TO}. It is shown there that  for  $X$ and $Y$, infinite dimensional  Banach spaces, and $\{d_n \}_{n\ge1}$  a non-increasing null sequence, there exists  an approximable $T: X \to Y$ such that 
$$ ||T|| \leq 2d_1 \quad \mbox{and}\quad d_m/9 \leq a_m(T) \leq 3d_{[m/4]} \quad \mbox{for any } m,$$
where $[m/4]$ denotes the integer part of $m/4$.
It is also worth mentioning that in \cite{KR} operators with prescribed eigenvalue sequences are constructed.


 \noindent

\mbox{Asuman G\"{u}ven AKSOY}\\
{~~~~~~~}\mbox{Claremont McKenna College}\\
{~~~~~~~}\mbox{Department of Mathematics}\\
{~~~~~~~}\mbox{Claremont, CA  91711, USA} \\
{~~~~~~~}\mbox{E-mail: aaksoy@cmc.edu} 


\begin{thebibliography}{99}
\bibitem{Ak-Pe} A. G. Aksoy, M.  Al-Ansari, C. Case and Q. Peng, \textit{ Subspace condition for Bernstein's lethargy theorem}. Turkish J. Math. \textbf{41} (2017), no. 5, 1101--1107.
 \bibitem{Ak-Pe2} A. G. Aksoy, Q. Peng, Constructing an element of a Banach space with given deviation from its nested subspaces, Khayyam J. Math. 4(1) (2018) 59--76.
\bibitem{Ak-Al}  A. G. Aksoy and  J. Almira, On Shapiro's lethargy theorem and some applications, Ja\'{e}n J. Approx., 6 (1), (2014) 87--116.
\bibitem{Ak-Le2}  A. G. Aksoy  and G. Lewicki, Bernstein's lethargy theorem in Fr\'{e}chet spaces,  J. Approx. Theory, 209,( 2016)  58--77.
\bibitem{Ak-Le}  A. G. Aksoy  and G. Lewicki, Diagonal operators, $s$-numbers and Bernstein pairs,  Note Mat., 17, (1999)  209--216.
\bibitem{Alb} G. Albinus , Remarks on a theorem of S. N. Bernstein, Studia Math., 38, (1970)  227--234.
\bibitem{Al-To} J. M. Almira  and N. del Toro, Some remarks on negative results in approximation theory,  Proceedings of the Fourth International Conference on Functional Analysis and Approximation Theory, Vol. I (Potenza, 2000). Rend. Circ. Mat. Palermo (2) Suppl. 2002, no. 68, part I, 245--256.
\bibitem{Al-Oi}  J. M. Almira  and T.  Oikhberg, Approximation schemes satisfying Shapiro's theorem, J. Approx. Theory, 164 (5), (2012) 534--571.
\bibitem{Bernstein} S. N. Bernstein, On the inverse problem in the theory of best approximation of continuous functions, Collected works (in Russian), vol II, Izd.  Akad. Nauk, USSR, (1954) 292--294.

\bibitem{Borodin1} P. A. Borodin, On the existence of an element with given deviations from an expanding system of
subspaces, Mathematical Notes, 80 (5), 621--630 (translated from Matematicheskie Zametki
80 (5), (2006) 657--667.
\bibitem{Carl} B. Carl and I. Stephani,\textit{Entropy, Compactness and the Approximation of Operators}, Cambridge University Press, Cambridge, 1990.
\bibitem{De-Hu} F. Deutsch and H. Hundal, A generalization of Tyuriemskih's lethargy theorem  and some applications, Numer. Func. Anal. Opt., 34 (9), (2013) 1033--1040. 

\bibitem{Di} J. Diestel,  \textit{Sequences and Series in Banach Spaces}, Springer-Verlag, 1984.
\bibitem{Enflo} P. Enflo, A counterexample to the approximation property, Acta Math., 13, (1973), 308--317.
\bibitem{HMR} C. Hutton, J. S.  Morrell and J. R. Retherford,  Diagonal operators, approximation numbers and Kolmogorow diameters, J. Approx. Theory, (16),  (1976) 48--80.
\bibitem{II} S. A.  Imomkulov and Z. Sh. Ibragimov,  Uniqueness property for Gonchar quasianalytic functions of several variables. \textit{ Topics in several complex variables}, 121-129, Contemp. Math., 662, Amer. Math. Soc., Providence, RI, 2016. 
\bibitem{KR} R. Kaiser and J. Retherford, Eigenvalue distribution of nuclear operators: a survey in: Vector Measures and Integral Representation of Operators, Essen University Press, Essen, 1983, pp.245-287.
\bibitem{Kal} N. J. Kalton, N. T. Peck and  J. W. Roberts, \textit{ An F-space Sampler} London Mathematical Society Lecture Notes Series, 1984.
\bibitem{Ko} H. K\"{o}nig, A formula for the eigenvalues of a compact operator, Studia Math. 65 (1979) 141--146.
\bibitem{Konyagin} S. V. Konyagin,  Deviation of elements of a Banach space from a system of subspaces, Proceedings of the Steklov Institute of Mathematics, 284 (1), 204--207 (2014).
    \bibitem{Lew}G.  Lewicki , Bernstein's ``lethargy" theorem in metrizable topological linear spaces, Monatsh. Math., 113,  (1992) 213--226.
    \bibitem{Lew1} G. Lewicki, A theorem of Bernstein's type for linear projections, Univ. lagel. Acta Math., 27,  (1988) 23--27.
    \bibitem{Mar} A. S. Marcus, Some criteria for the completeness of a system of root vectors of a linear operator in a Banach space, Transl. Amer. Math. Soc. \textbf{85}, (1969) 325-349.
\bibitem{Mich} B. Micherda, Bernstein's lethargy theorem in SF-spaces, Z. Anal. Anwendungen, 22 (1),  (2003) 3--16.
\bibitem{TO}  T. Oikhberg, Rate of decay of s-numbers, J. Approx. Theory 163, (2011)  311-327.
\bibitem{Pi} A. Pietch, \textit{ Eigenvalues and s-Numbers}, Cambridge University Press, Cambridge, 1987.
\bibitem{Pi2} A. Pietch, \textit{ Operator Ideals}, North Holland, Amsterdam, 1980.
\bibitem{Ple1} W. Ple\'{s}niak, On a theorem of S. N. Bernstein in $F$-Spaces, Zeszyty Naukowe Uniwersytetu Jagiellonskiego, Prace Mat., 20,   (1979) 7--16.
\bibitem{Ple2} W. Ple\'{s}niak , Quasianalytic functions in the sense of Bernstein, Dissertations Math., 147, (1977) 1--70.
\bibitem{Ple3} W. Ple\'{s}niak,Characterization of quasi-analytic functions of several variables by
means of rational approximation, Ann. Polon. Math. 27 (1973), 149-157.
\bibitem{Rol} S. Rolewicz, \textit{Metric Linear Spaces}, Warszawa: PWN, 1982.

\bibitem{Sha} H. S. Shapiro, Some negative theorems of approximation theory, Michigan Math. J., 11, (1964)  211--217.
\bibitem{Sin} I. Singer,\textit{ Best Approximation in Normed Linear Spaces by Elements of Linear Subspaces}. Springer-Verlag, Berlin 1970.
\bibitem{Tim} A. F. Timan , \textit{Theory of Approximation of Functions of a Real Variable}. Dover publications, New York  1994.

\bibitem{Tyu} I. S. Tyuriemskih , On one problem of S. N. Bernstein, in: Scientific Proceedings of Kaliningrad State Pedagog. Inst., 52, (1967) 123--129.
\bibitem {Tyu2}  I. S. Tyuriemskih , The $B$-Property of Hilbert Spaces,  Uch. Zap. Kalinin. Gos. Pedagog. Inst., 39,  (1964) 53--64.
\bibitem{Vas} A. I. Vasil'ev , The inverse problem in the theory of best approximation in $F$-spaces, (In Russian) Dokl. Ross. Akad. Nauk., 365 (5), (1999) 583--585.
\bibitem{We}H.  Weyl, Inequalities between two kinds of eigenvalues of a linear trasformation. Proc. Nat. Acad. Sci. USA 35 (1949), 408--411.
\bibitem{Wo} P.  Wojtaszczyk, \textit{Banach Spaces for Analysts}, Studies in Advanced Mathematics, vol. 25, Cambridge University Press, Cambridge, 1991.

\end{thebibliography}
\end{document}